\title{\bf JSJ Decompositions of Coxeter Groups over FA Subgroups}
\author{John Ratcliffe and Steven Tschantz \\ 
Mathematics Department, Vanderbilt University, \\
Nashville TN 37240, USA}
\newtheorem{theorem}{Theorem}[section]
\newtheorem{lemma}[theorem]{Lemma}
\newtheorem{corollary}[theorem]{Corollary}
\newtheorem{conjecture}[theorem]{Conjecture}
\newenvironment{example}{\vspace{.2in}{\noindent\bf Example:\ }}{}
\newenvironment{remark}{\vspace{.2in}{\noindent\bf Remark: }}{}
\newenvironment{proof}{{\bf Proof:\ }}{\hfill$\square$\vspace{.2in}}
\date{}
\begin{document}
\maketitle

\noindent {\bf Abstract:} A group $G$ has property FA if $G$ fixes a point of every tree 
on which $G$ acts without inversions.  
We prove that every Coxeter system of finite rank has a visual JSJ decomposition over subgroups with property FA.  As an application, we reduce the twist conjecture to Coxeter systems that are indecomposable with respect to amalgamated products over visual subgroups with property FA. 

\section{Introduction} 

JSJ decompositions first appeared in 3-manifold theory as secondary decompositions 
of 3-manifolds over tori.  
The primary decompositions of 3-manifolds being decompositions over 2-spheres. 
There is a close relationship between the topology of a 3-manifold and the algebraic properties of its fundamental group, so it was natural for JSJ decompositions to migrate to group theory.  

A JSJ decomposition of a group $G$, over a class of subgroups ${\cal A}$, is a graph of groups decomposition of $G$, with edge groups in ${\cal A}$, that has certain universal properties.  
JSJ decompositions of groups over various classes of subgroups have been worked out by a number of authors.  
For an introduction to JSJ decompositions of groups, see Guirardel and Levitt \cite{G-L}. 
Mihalik  \cite{M} recently worked out nice JSJ decompositions of Coxeter groups  over virtually abelian subgroups. 
In this paper, we work out nice JSJ decompositions of Coxeter groups over subgroups with property FA. 
Our JSJ decompositions of Coxeter groups are more primary than Mihalik's JSJ decompositions, and are analogous to the primary connect sum decompositions of 3-manifolds, whereas Mihalik's JSJ decompositions of Coxeter groups are analogous to the JSJ decompositions of 3-manifolds.

Let  $(W,S)$ be a Coxeter system of finite rank.  
By Lemma A of Bass \cite{Bass}, the graph of any graph of groups decomposition of $W$ is a tree, 
since the abelianization of $W$ is finite.  
Therefore a graph of groups decomposition $\Psi$ of $W$ is {\it reduced} if and only if 
no edge group of $\Psi$ is equal to a vertex group of $\Psi$. 
A {\it visual subgroup} of $(W,S)$ is a subgroup of $W$ generated by a subset of $S$. 
A {\it visual graph of groups decomposition} of $(W,S)$ is a graph of groups decomposition $\Psi$ of $W$ 
such that all the vertex and edge groups of $\Psi$ are visual subgroups of $(W,S)$.

A group $G$ has {\it property} FA if $G$ fixes a point of every tree on which $G$ acts without inversions.  
As a references for groups with property FA, see \cite{Bass} and \S I.6 of Serre's book \cite{Serre}. 
A subset $C$ of $S$ is said to be {\it complete} if the product of any two elements of $C$ has finite order.  
Note that the empty set is complete. 
If $C$ is a complete subset of $S$, we call $\langle C\rangle$ a complete visual subgroup of $W$.  
 A visual subgroup $\langle C\rangle$ of $(W,S)$ has property FA  if and only if $C$ is complete. 
 Mihalik and Tschantz  \cite{M-T} proved that if a subgroup $H$ of $W$ 
has property FA, then $H$ is contained in a conjugate of a complete visual subgroup of $(W,S)$ 
and the maximal subgroups of $W$ with property FA are the conjugates of the maximal 
complete visual subgroups of $(W,S)$. 

Let ${\cal FA}$ be the set of all subgroups of $W$ that are contained in some subgroup of $W$ 
with property FA.  Then ${\cal FA}$ is closed with respect to subgroups and conjugation. 
In this paper we prove that $(W,S)$ has a visual reduced JSJ decomposition $\Psi$ over the class of 
subgroups ${\cal FA}$.  All the edge groups of $\Psi$ are complete visual subgroups, 
and so have property FA.  The vertex groups of $\Psi$ are the maximal visual subgroups 
of $(W,S)$ that are indecomposable as an amalgamated product over a complete visual subgroup.  

We prove that sets of conjugacy classes of the vertex groups and the edge groups of a visual reduced JSJ decomposition $\Psi$ of $(W,S)$ over ${\cal FA}$ do not depend 
on the choice of the set of Coxeter generators $S$ of $W$. 

We prove that all the vertex groups of a visual reduced JSJ decomposition of $(W,S)$ over ${\cal FA}$ are complete if and only if $(W,S)$ is a chordal Coxeter system \cite{R-T}. 

As an application to the isomorphism problem for Coxeter groups, 
we reduce M\"uhlherr's twist conjecture \cite{Muhlherr} 
to Coxeter systems that are indecomposable as an amalgamated product over 
a visual complete subgroup.

\section{Existence of visual JSJ decompositions} 
 
We will use presentation diagrams to graphically represent 
Coxeter systems rather than Coxeter diagrams.  
The {\it presentation diagram} ({\it {\rm P}-diagram}) of 
a Coxeter system $(W,S)$ is the labeled undirected graph 
$\Gamma(W,S)$ with vertices $S$ and edges 
$\{(s,t) : s, t \in S\ \hbox{and}\ 1<\ m(s,t) < \infty\}$
such that an edge $(s,t)$ is labeled by the {\it order} $m(s,t)$ of $st$ in $W$. 
Note that a subset $C$ of $S$ is complete if and only if 
the underlying graph of $\Gamma(\langle C\rangle, C)$ is complete. 

Let $(W,S)$ be a Coxeter system of finite rank.  
Suppose that $S_1,S_2\subseteq S$, with $S=S_1\cup S_2$ and $S_0=S_1\cap S_2$,  
are such that $m(a,b)=\infty$ for all $a\in S_1-S_0$ and $b\in S_2-S_0$. 
Then we can write $W$ as a visual amalgamated product
$$W=\langle S_1\rangle*_{\langle S_0\rangle}\langle S_2\rangle.$$
 
We say that $S_0$ {\it separates} $S$ if
$S_1-S_0\neq\emptyset$ and $S_2-S_0\neq\emptyset$.  
The amalgamated product decomposition of $W$ will be
nontrivial if and only if $S_0$ separates $S$. 
If $S_0$ separates $S$, we call the triple $(S_1,S_0,S_2)$ 
a {\it separation} of $S$, and $S_0$ a {\it separator} of $S$. 
Note that $S_0$ separates $S$ if and only if $S_0$ 
separates $\Gamma(W,S)$, that is, there are $a,b$ in $S-S_0$ 
such that every path in $\Gamma(W,S)$ from $a$ to $b$ must pass through $S_0$. 
A subset $S_0$ of $S$ is a {\it minimal separator} of $S$, 
if $S_0$ separators $S$, and no other subset of $S_0$ separates $S$. 

\begin{lemma}  
Let $(W,S)$ be a Coxeter system, and let $(S_1,S_0,S_2)$ be a separation of $S$ 
such that $S_0$ is complete.  If $T \subset S_1$ separates $S_1$, then $T$ separates $S$. 
\end{lemma}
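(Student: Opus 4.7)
The plan is to argue at the level of the presentation diagram, showing that any path in $\Gamma(W,S)$ avoiding $T$ between two given vertices of $S_1$ can be rerouted to stay inside $S_1$ while still avoiding $T$. Once this rerouting is done, the hypothesis that $T$ separates $S_1$ immediately forces $T$ to separate $S$.

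First, I would use the hypothesis that $T$ separates $S_1$ to choose $a,b\in S_1-T$ such that every path in $\Gamma(\langle S_1\rangle, S_1)$ from $a$ to $b$ meets $T$. Since $a,b\in S-T$, it suffices to show that every path in $\Gamma(W,S)$ from $a$ to $b$ also meets $T$, and then $(a,b)$ will witness that $T$ separates $S$. So suppose for contradiction that $a=v_0,v_1,\ldots,v_n=b$ is a path in $\Gamma(W,S)$ with no $v_i$ in $T$, and I will turn it into a path in $\Gamma(\langle S_1\rangle, S_1)$ from $a$ to $b$ avoiding $T$.

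The key step is to excise the excursions of this path into $S_2-S_0$. Let $v_j,\ldots,v_k$ be a maximal subsegment with $v_i\in S_2-S_0$ for $j\le i\le k$. Since $a,b\in S_1$ we have $1\le j\le k\le n-1$, so $v_{j-1}$ and $v_{k+1}$ are defined. Because $\Gamma(W,S)$ has no edge between $S_1-S_0$ and $S_2-S_0$, each of $v_{j-1},v_{k+1}$ must lie in $S_2$; by maximality of the subsegment, neither lies in $S_2-S_0$, so both lie in $S_0$. This is where the completeness of $S_0$ enters: either $v_{j-1}=v_{k+1}$, or $v_{j-1}$ and $v_{k+1}$ are joined by an edge of $\Gamma(W,S)$. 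In either case I may splice out the detour, replacing $v_{j-1},v_j,\ldots,v_k,v_{k+1}$ by $v_{j-1},v_{k+1}$, and the spliced path still avoids $T$ since its only vertices were already on the original path.

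Performing this surgery for every maximal excursion yields a path from $a$ to $b$ whose vertices all lie in $S_1$ and none lie in $T$, i.e., a path in $\Gamma(\langle S_1\rangle, S_1)$ from $a$ to $b$ avoiding $T$. This contradicts the choice of $a$ and $b$. I do not anticipate any serious obstacle, since the whole argument rests on a single combinatorial observation (the splice), and that observation is an immediate consequence of the completeness of $S_0$ together with the absence of edges between $S_1-S_0$ and $S_2-S_0$.
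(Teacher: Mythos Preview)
Your proof is correct and follows essentially the same approach as the paper's: both argue by contradiction, take a $T$-avoiding path in $\Gamma(W,S)$ between two witnesses in $S_1$, and use completeness of $S_0$ to short-circuit its excursions into $S_2-S_0$, producing a $T$-avoiding path in $\Gamma(\langle S_1\rangle,S_1)$. The only cosmetic difference is that the paper splices once from the first exit point in $S_0$ to the last return point in $S_0$, whereas you splice each maximal excursion separately.
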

\begin{proof}
On the contrary, suppose that $T$ does not separate $S$. 
As $T$ separates $S_1$, there exist $x, y$ in $S_1-T$ such that 
every path in $\Gamma(\langle S_1\rangle, S_1)$ from $x$ to $y$ passes through $T$. 
As $T$ does not separate $S$, there is a path in $\Gamma(W,S)$ from $x$ to $y$ 
that avoids $T$.  The path must exit $\Gamma(\langle S_1\rangle, S_1)$ through $S_0$ 
at some first element $a$ of $S_0$ before entering $S-S_1$ and must pass back through $S_0$ 
at some last element $b$ of $S_0$.  As $S_0$ is complete, we can short circuit 
the path by going directly from $a$ to $b$. 
This gives a path from $x$ to $y$ in $\Gamma(\langle S_1\rangle, S_1)$ that avoids $T$,  
which is a contradiction.  Thus $T$ must separate $S$. 
\end{proof}

\begin{lemma} 
Let $(W,S)$ be a Coxeter system of finite rank. 
Then $(W,S)$ has a visual reduced graph of groups decomposition $\Psi$ 
such that for each vertex system $(V,R)$ of $\Psi$, the set $R$ is not separated by a complete subset, 
and such that each edge group of $\Psi$ is a complete visual subgroup of $(W,S)$. 
\end{lemma}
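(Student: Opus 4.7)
The plan is to proceed by induction on $|S|$. If no complete subset of $S$ separates $S$—which covers the base case $|S|\le 1$—then the trivial visual graph of groups decomposition with one vertex system $(W,S)$ and no edges already has all the required properties.

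Otherwise choose a complete separator $S_0$ of $S$, with separation $(S_1,S_0,S_2)$, giving the visual amalgamated product
$$W=\langle S_1\rangle *_{\langle S_0\rangle}\langle S_2\rangle.$$
Since $S_i-S_0\neq\emptyset$ for $i=1,2$, we have $|S_i|<|S|$, so the inductive hypothesis applied to each Coxeter system $(\langle S_i\rangle,S_i)$ produces a visual reduced graph of groups decomposition $\Psi_i$ with complete visual edge groups and with every vertex generating set not separated by a complete subset.

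I would next show that $S_0$ is contained in the generating set of exactly one vertex system of each $\Psi_i$. The point is that every edge group of $\Psi_i$ is complete, and a complete subset $T$ of $S_i$ cannot separate the complete set $S_0$: any candidate partition of $S_0\setminus T$ would require a clique edge of $\Gamma(\langle S_0\rangle,S_0)$ crossing $T$, contradicting the fact that $T$ separates. Tracing this through the recursive construction of $\Psi_i$ shows $S_0$ is never split off, so there is a unique vertex $v_i$ of $\Psi_i$ whose generating set $R_i^*$ satisfies $S_0\subseteq R_i^*$. Form $\Psi$ by joining $\Psi_1$ and $\Psi_2$ with a single new edge between $v_1$ and $v_2$ carrying edge group $\langle S_0\rangle$. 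The underlying graph is a tree, $\Psi$ visually realizes $W$ via the amalgamated product above, every edge group is a complete visual subgroup, and the vertex generating sets inherit the no-complete-separator property from $\Psi_1$ and $\Psi_2$.

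The main obstacle is reducedness. The new edge violates reducedness precisely when $R_i^*=S_0$ for some $i$, in which case $\langle S_0\rangle$ coincides with the attaching vertex group $\langle R_i^*\rangle$. I would handle this by folding: when $R_i^*=S_0$, collapse $v_i$ into $v_{3-i}$ along the new edge, reattaching any $\Psi_i$-edges formerly incident to $v_i$ (whose edge groups are complete proper subsets of $\langle S_0\rangle$) directly to $v_{3-i}$. The remaining vertex generating sets are unchanged, so the no-complete-separator condition persists, and no reattached edge group can equal $\langle R_{3-i}^*\rangle$ because it is properly contained in $\langle S_0\rangle\subseteq\langle R_{3-i}^*\rangle$. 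A symmetric fold is applied if $R_{3-i}^*=S_0$ also. The delicate part of the write-up will be the bookkeeping showing that this folding step (possibly on both sides) terminates and produces a genuinely reduced visual decomposition without breaking the vertex-set property; once that is carried out, the induction closes.
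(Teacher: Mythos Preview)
Your inductive strategy matches the paper's, and your folding argument is correct; in fact a single collapse of the new edge always suffices when $R_i^*=S_0$, since after that collapse every edge incident to the merged vertex has edge group properly contained in $\langle S_0\rangle\subseteq\langle R_{3-i}^*\rangle$, so no iterated folding or ``symmetric fold'' is needed.

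The paper, however, avoids the folding step altogether by choosing $S_0$ to be a \emph{minimal} separator of $S$ at the outset, which is permissible because every subset of a complete set is complete. Reducedness at the new edge then follows directly: if $V_i=\langle S_i\rangle$ then $\langle S_0\rangle\neq V_i$ since $S_0\subsetneq S_i$; otherwise some edge of $\Psi_i$ incident to $V_i$ has generating set $T$ separating $S_i$, and Lemma~2.1 (which uses the completeness of $S_0$) shows $T$ separates $S$. Were the generating set of $V_i$ equal to $S_0$, the proper subset $T\subsetneq S_0$ would be a smaller separator of $S$, contradicting minimality. This choice buys a cleaner write-up with no post-hoc reduction, at the cost of invoking the auxiliary Lemma~2.1; your approach trades that lemma for a standard collapse move.
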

\begin{proof}
The proof is by induction on $|S|$. 
Suppose that $S$ is not separated by a complete subset. 
This includes the case $|S| = 1$. 
Let $\Psi$ be the trivial graph of groups decomposition of $W$ with one vertex and no edges. 
Then $\Psi$ satisfies the requirements of the lemma. 
Suppose  the lemma is true for all Coxeter systems of rank less than $|S|$ 
and $S$ is separated by a complete subset $S_0$. 
As every subset of $S_0$ is complete, we may assume that $S_0$ 
is a minimal separator of $S$. 
Let $(S_1,S_0,S_2)$ be a separation of $S$. 
Then $|S_i| < |S|$ for each $i=1,2$. 
By the induction hypothesis, $(\langle S_i\rangle, S_i)$ has a visual 
reduced graph of groups decomposition $\Psi_i$ satisfying the requirements of the lemma 
for each $i = 1,2$. 
As $S_0$ is complete, $S_0$ is not separated by any subset. 
Hence $S_0$ is contained in a vertex group $V_i$ of $\Psi_i$ for each $i =1,2$. 
We define a visual graph of groups decomposition of $(W,S)$ 
whose graph is obtained by joining the graph of $\Psi_1$ to the graph of $\Psi_2$ by 
an edge from the vertex of the graph of $\Psi_1$ corresponding to $V_1$ 
to the vertex of the graph of $\Psi_2$ corresponding to $V_2$. 
The vertex groups of $\Psi$ are the vertex groups of $\Psi_1$ and $\Psi_2$ assigned to their previous vertices.  The edge groups of $\Psi$ are the edge groups of $\Psi_1$ and $\Psi_2$, assigned to their previous edges, together with the group $\langle S_0\rangle$ assigned to the new edge. 

We next show that $\Psi$ is reduced. 
First assume $V_i = \langle S_i\rangle$ for some $i = 1,2$. 
Then $\langle S_0\rangle \neq V_i$, since $S_0 \neq S_i$. 
Now assume $V_i \neq \langle S_i\rangle$. 
Then there is an edge group $E$ of $\Psi_1$ incident to $V_1$. 
Let $T \subset S_1$ be the set of visual generators of $E$. 
Then $T$ separates $S_1$, and so $T$ separates $S$ by Lemma 2.1. 
Now $\langle S_0\rangle \neq V_i$, since otherwise $S_0$ would contain $T$ 
as a proper subset contradicting the fact that $S_0$ is a minimal separating subset of $S$. 
Hence $\Psi$ is reduced. 
Thus $\Psi$ has all the required properties. 
This completes the induction. 
\end{proof}

\begin{lemma}  
Let $(W,S)$ be a Coxeter system of finite rank, 
let $\Psi$  be a visual reduced graph of groups decomposition of $(W,S)$ 
such that for each vertex system $(V,R)$ of $\Psi$, the set $R$ is not separated 
by a complete subset, and such that 
each edge group of $\Psi$ is  a complete visual subgroup of $(W,S)$,   
and let $\Phi$ be a graph of groups decomposition of $W$ with edge groups in  ${\cal FA}$. 
Then each vertex group of $\Psi$ is contained in a conjugate of a vertex group of $\Phi$. 
\end{lemma}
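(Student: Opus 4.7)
My plan is to use Bass--Serre theory. Let $T$ denote the Bass--Serre tree of $\Phi$; then $W$ acts on $T$ without inversions, with vertex stabilizers conjugate to vertex groups of $\Phi$ and edge stabilizers conjugate to edge groups of $\Phi$, hence in ${\cal FA}$. It suffices to show that each vertex group $V=\langle R\rangle$ of $\Psi$ fixes a vertex of $T$, for then $V$ is contained in the corresponding vertex stabilizer, as claimed.

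I would argue by contradiction. Suppose some $V=\langle R\rangle$ fixes no vertex of $T$, and let $T'\subseteq T$ be the minimal $V$-invariant subtree. Because $V$ is a Coxeter group of finite rank, its abelianization is finite, so Lemma A of Bass \cite{Bass} forces $V\backslash T'$ to be a tree. Bass--Serre theory then presents $V$ as a nontrivial graph of groups over $V\backslash T'$ with edge groups in ${\cal FA}$ (as subgroups of edge stabilizers of $T$); collapsing all but one edge produces a nontrivial amalgamated product $V=A\ast_C B$ with $C\in{\cal FA}$.

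The decisive step is to upgrade this abstract splitting to a visual separation of $R$ by a complete subset, which directly contradicts the hypothesis on $(V,R)$. The Mihalik--Tschantz theorem \cite{M-T}, applied inside $(V,R)$, places $C$ in a conjugate of a complete visual subgroup of $(V,R)$. Passing to the Bass--Serre tree of the splitting $V=A\ast_C B$, I would choose an edge $e$ whose stabilizer contains $C$ and partition $R$ according to how each generator's fixed subtree sits relative to $e$: those fixing $e$ form a candidate $R_0$, and the remaining generators fall into $R_1,R_2$ on the two sides. The resulting $(R_1,R_0,R_2)$ would then be the desired visual separation of $R$ with $R_0$ complete, contradicting the hypothesis and forcing $V$ to fix a vertex of $T$.

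The principal obstacle is this last step. The conclusion that $\langle R_0\rangle\in{\cal FA}$ and even that $R_0$ lies inside a conjugate of a complete visual subgroup is immediate from \cite{M-T}, but showing that $R_0$ is itself a complete \emph{subset} of $R$ is not automatic: complete visual subgroups can contain infinite-dihedral subgroups, so one cannot rule out $m(r,r')=\infty$ among generators fixing $e$ from the ${\cal FA}$ hypothesis alone. Extracting a genuine complete subset $R_0\subseteq R$ will require careful use of the reflection and parabolic structure of $(V,R)$ underlying \cite{M-T}, together with a judicious choice of the edge $e$.
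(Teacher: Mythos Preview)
You correctly identify the principal obstacle, and it is a genuine gap: there is no direct way to promote an abstract splitting $V=A\ast_C B$ with $C\in{\cal FA}$ to a separation of $R$ by a complete subset via the ad hoc partition you describe. As you observe, complete visual subgroups (for instance irreducible affine ones) can contain infinite dihedral subgroups, so knowing that $\langle R_0\rangle\in{\cal FA}$, or even that $\langle R_0\rangle$ lies in a conjugate of a complete visual subgroup, does not by itself force the set $R_0$ to be complete.

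The paper resolves this cleanly by invoking two results you do not use. First, rather than building a partition of $R$ by hand, it applies the visual decomposition theorem of Mihalik--Tschantz (Theorem~1 of \cite{M-T}) to $(V,R)$ relative to $\Phi$: this produces a \emph{visual} graph of groups decomposition $\Lambda$ of $(V,R)$ whose edge groups are generated by subsets $T\subseteq R$ and are contained in conjugates of edge groups of $\Phi$, hence lie in ${\cal FA}$. This guarantees from the outset that the candidate separating sets are genuine subsets of $R$. Second, Lemma~4.3 of \cite{M-R-T} says that if $T,C\subseteq S$ and $\langle T\rangle$ is contained in a conjugate of $\langle C\rangle$, then $T$ is itself conjugate (elementwise) to a subset of $C$. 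Applying this with $C$ complete (supplied by Lemma~25 of \cite{M-T}) forces $T$ to be complete, since conjugation preserves the orders $m(s,t)$. This is precisely the missing step: the passage from ``$\langle T\rangle$ sits inside a conjugate of a complete visual subgroup'' to ``$T$ is complete'' requires $T$ to already be a subset of the Coxeter generators, which is why the visual decomposition theorem must come first. Once $T$ is complete and separates $R$, the hypothesis on $(V,R)$ is contradicted, so $\Lambda$ has a single vertex and $V$ lies in a conjugate of a vertex group of $\Phi$. Your Bass--Serre setup is correct but unnecessary once Theorem~1 of \cite{M-T} is in hand.
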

\begin{proof}
Let $(V,R)$ be a vertex system of $\Psi$. 
By Theorem 1 of \cite{M-T}, the Coxeter system $(V,R)$ has a visual graph of groups decomposition 
$\Lambda$ such that each vertex group of $\Lambda$ is contained in a conjugate of a vertex group of $\Phi$ and each edge group of $\Lambda$ is contained in a conjugate of an edge group of $\Phi$.  
As $R$ is finite, we may assume that the graph of $\Lambda$ has only finitely many vertices and edges, 
and that $\Lambda$ is reduced. 
Let $(E,T)$ be an edge system of $\Lambda$. 
Then $E \in {\cal FA}$, since the edge groups of $\Phi$ are in ${\cal FA}$.  
Hence there is a complete subset $C$ of $S$ such that $E$ in contained in a conjugate of $\langle C\rangle$ 
by Lemma 25 of \cite{M-T}.  
This implies that $T$ is conjugate to a subset of $C$ by Lemma 4.3 of \cite{M-R-T}. 
Therefore $T$ is complete. 
Hence $R$ is separated by a complete subset, which is a contradiction. 
Therefore the graph of $\Lambda$ consists of a single point, 
and so $V$ is contained in a conjugate of a vertex group of $\Phi$. 
\end{proof}

The next theorem together with Lemma 2.2 imply that visual reduced JSJ decompositions 
over ${\cal FA}$ of a Coxeter system of a finite rank exist. 

\begin{theorem}  
Let $(W,S)$ be a Coxeter system of finite rank, 
and let $\Psi$  be a visual reduced graph of groups decomposition of $(W,S)$ 
such that for each vertex system $(V,R)$ of $\Psi$, the set $R$ is not separated 
by a complete subset, and such that 
each edge group of $\Psi$ is  a complete visual subgroup of $(W,S)$.  
Then $\Psi$ is a JSJ decomposition of $W$ over the class  ${\cal FA}$. 
\end{theorem}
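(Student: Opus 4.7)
The plan is to verify that $\Psi$ meets the Guirardel--Levitt \cite{G-L} definition of a JSJ decomposition of $W$ over $\mathcal{FA}$: namely, $\Psi$ should be an $\mathcal{FA}$-decomposition that is universally elliptic and that dominates every other universally elliptic $\mathcal{FA}$-decomposition of $W$.

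First I will observe that $\Psi$ is itself an $\mathcal{FA}$-decomposition. Each edge group of $\Psi$ is, by hypothesis, a complete visual subgroup of $(W,S)$, and such a subgroup has property FA as recorded in the introduction; in particular it lies in $\mathcal{FA}$. Next I will check universal ellipticity. Let $\Phi$ be any graph of groups decomposition of $W$ with edge groups in $\mathcal{FA}$, and let $T_\Phi$ be its Bass--Serre tree. Every edge group of $\Psi$ has property FA, so it fixes a vertex of $T_\Phi$, which is exactly to say it is elliptic in $\Phi$. So $\Psi$ is universally elliptic.

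For the maximality/domination clause, the main work has already been done in Lemma 2.3. Given an arbitrary $\mathcal{FA}$-decomposition $\Phi$ of $W$, that lemma says every vertex group of $\Psi$ is contained in a conjugate of a vertex group of $\Phi$, i.e.\ is elliptic in $\Phi$. Via the Bass--Serre dictionary, this vertex-group ellipticity yields a $W$-equivariant simplicial map $T_\Psi \to T_\Phi$ between the Bass--Serre trees, which is precisely the statement that $\Psi$ dominates $\Phi$. Note that this in fact gives a stronger conclusion than required: $\Psi$ dominates \emph{every} $\mathcal{FA}$-decomposition of $W$, not merely the universally elliptic ones.

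The theorem is therefore essentially a translation step: its substance is already packaged in Lemma 2.3 (for domination) and in the characterization of visual subgroups with property FA (for universal ellipticity). The main obstacle is purely expository, making sure the Guirardel--Levitt definition is matched correctly and the Bass--Serre correspondence between vertex-group ellipticity and equivariant maps of trees is invoked properly; no further combinatorial work on $(W,S)$ is needed.
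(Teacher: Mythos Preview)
Your approach is essentially the same as the paper's: verify universal ellipticity via property FA of the edge groups, and invoke Lemma~2.3 for domination. The one clause you omit is minimality: the Guirardel--Levitt definition in \cite{G-L} also requires the decomposition to be minimal (no proper $W$-invariant subtree of the Bass--Serre tree), and the paper explicitly checks this by noting that $\Psi$ is reduced (cf.\ \cite{Forester}, \S 2). This is an easy fix---just add the observation that a reduced graph of groups with a finite tree as its underlying graph yields a minimal action---but as written your statement of the definition is incomplete and the proof does not address it.
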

\begin{proof}
According to Guirardel and Levitt \cite{G-L}, we need to show that $\Psi$ is 
minimal, universally elliptic, and that $\Psi$ dominates every minimal universally elliptic graph of groups decomposition of $W$ over ${\cal FA}$. 
For a discussion of minimal graph of groups decompositions, see \S 2 of \cite{Forester}. 
The graph of groups decomposition $\Psi$ is minimal, since it is reduced, 
and universally elliptic, since the edge groups of $\Psi$ have property FA. 
By Lemma 2.3, the graph of groups decomposition $\Psi$ dominates every 
minimal graph of groups decomposition of $W$ over ${\cal FA}$. 
Thus $\Psi$ is a JSJ decomposition of $W$ over ${\cal FA}$. 
\end{proof}

Let $(W,S)$ be a Coxeter system of finite rank. 
Let $S_0\subset S$, and let $a,b\in  S-S_0$. 
We say that $S_0$ is an {\it $(a,b)$-separator} of $S$ if there is a separation $(S_1,S_0,S_2)$ of $S$ 
such that $a\in S_1-S_0$ and $b\in S_2-S_0$. 
Note that $S_0$ is an $(a,b)$-separator  of $S$ if and only if $a$ and $b$ lie in different connected components of $\Gamma(\langle S-S_0\rangle, S-S_0)$; 
moreover, $S_0$ separates $S$ if and only if there are elements $a, b\in S-S_0$ 
such that $S_0$ is an $(a,b)$-separator of $S$. 
We say that $S_0$ is a {\it minimal $(a,b)$-separator} of $S$ if $S_0$ is an $(a,b)$-separator of $S$ 
and no other subset of $S_0$ is an $(a,b)$-separator of $S$. 
We say that $S_0$ is a {\it relative minimal separator} of $S$ if there exists elements $a,b\in  S$ 
such that $S_0$ is a minimal $(a,b)$-separator of $S$. 
Note that every minimal separator of $S$ is a relative minimal separator of $S$, 
but a relative minimal separator of $S$ need not be a minimal separator of $S$. 

The next theorem characterizes the vertex groups and the edge groups of our JSJ decompositions. 

\begin{theorem} 
Let $(W,S)$ be a Coxeter system of finite rank, 
and let $\Psi$  be a visual reduced graph of groups decomposition of $(W,S)$ 
such that for each vertex system $(V,R)$ of $\Psi$, the set $R$ is not separated 
by a complete subset, and such that 
each edge group of $\Psi$ is  a complete visual subgroup of $(W,S)$.  
Let ${\cal V}$ be the set of all maximal subsets of $S$ that are not separated by a complete subset, 
and let ${\cal E}$ be the set of all complete, relative, minimal, separators of $S$. 
Then all the subgroups generated by sets in ${\cal V}$ are the vertex groups of $\Psi$, 
and all the subgroups generated by sets in ${\cal E}$ are the edge groups of $\Psi$. 
\end{theorem}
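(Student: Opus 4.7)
My plan is to prove the theorem by establishing four inclusions: (a) each vertex generating set $R_v$ of $\Psi$ lies in ${\cal V}$; (b) each $R' \in {\cal V}$ is the generating set of a vertex of $\Psi$; (c) each edge generating set $C_e$ of $\Psi$ lies in ${\cal E}$; and (d) each $C \in {\cal E}$ is the generating set of an edge of $\Psi$. The key tool throughout is a visual subtree property: for each $s \in S$, the set $\Psi_s = \{v \in \Psi : s \in R_v\}$ is a connected subtree of $\Psi$, and $s \in R_u \cap R_v$ forces $s \in C_e$ for every edge $e$ on the path from $u$ to $v$. This is immediate, since removing any such $e$ yields a splitting $(S_1^e, C_e, S_2^e)$ of $S$ with $R_u \subseteq S_1^e$ and $R_v \subseteq S_2^e$, so $s \in S_1^e \cap S_2^e = C_e$. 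I will also repeatedly use the non-mixing observation: if $(T_1, C, T_2)$ is a separation of $S$ with $C$ complete and $R$ is not separated by a complete subset, then $R \subseteq T_1$ or $R \subseteq T_2$, since otherwise $R \cap C$ would be a complete subset separating $R$.

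For (a), if $R_v \subsetneq R'$ with $R'$ not separated by a complete subset, I would pick $s \in R' \setminus R_v$ and let $e$ be the first edge on the path in $\Psi$ from $v$ to a vertex containing $s$; using that $\Psi$ is reduced, choose $r \in R_v \setminus C_e$, and observe that $r, s \in R' \setminus C_e$ lie on opposite sides of the splitting determined by $e$, so $R' \cap C_e$ is a complete subset separating $R'$, a contradiction. For (b), given $R' \in {\cal V}$, I would apply Helly's theorem for subtrees of a finite tree to the family $\{\Psi_s : s \in R'\}$. Pairwise nonempty intersection $\Psi_s \cap \Psi_t \neq \emptyset$ holds because if $\Psi_s$ and $\Psi_t$ were disjoint then any edge $e$ on the bridge between them would have $s, t \notin C_e$ on opposite sides of the splitting determined by $e$, making $R' \cap C_e$ a complete subset separating $R'$, contradicting $R' \in {\cal V}$. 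The resulting common vertex $v$ satisfies $R' \subseteq R_v$; since $R_v \in {\cal V}$ by (a), the maximality of $R'$ forces $R' = R_v$.

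For (c), $C_e$ is complete by hypothesis, and choosing $a \in R_u \setminus C_e$, $b \in R_v \setminus C_e$ (possible since $\Psi$ is reduced) makes $C_e$ an $(a,b)$-separator of $S$. Minimality follows from non-mixing: any $(a,b)$-separator $C' \subseteq C_e$ with separation $(T_1, C', T_2)$ would force $R_u \subseteq T_1$ and $R_v \subseteq T_2$, whence $C_e \subseteq R_u \cap R_v \subseteq T_1 \cap T_2 = C'$, giving $C' = C_e$. For (d), given a minimal $(a,b)$-separator $C$ with separation $(S_1, C, S_2)$, non-mixing shows every vertex $v$ of $\Psi$ has $R_v \subseteq S_1$ or $R_v \subseteq S_2$, so the subtrees $\Psi_a$ and $\Psi_b$ lie on opposite sides and are disjoint; any transition edge $e$ on the bridge between them has $C_e \subseteq R_u \cap R_v \subseteq S_1 \cap S_2 = C$ and remains an $(a,b)$-separator, so the minimality of $C$ forces $C_e = C$.

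The main obstacle is part (b): one must resist the natural impulse to argue via Lemma 2.3, which would only give $\langle R' \rangle \subseteq g \langle R_v \rangle g^{-1}$ for some $g \in W$, and extracting the desired equality $R' = R_v$ from this containment requires delicate parabolic-subgroup rigidity that is not available in the present setup. The Helly-subtree approach avoids this difficulty by working entirely at the combinatorial level of $\Psi$ and $\Gamma(W,S)$, but depends on first establishing the visual subtree property together with the fact that $\Psi$ is a finite tree (which follows from reducedness and the finiteness of $S$).
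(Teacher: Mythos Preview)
Your proof is correct and follows the same four-part architecture as the paper's argument, with the key ingredient in each part being the tree structure of the graph of $\Psi$ together with what you call the non-mixing observation.  The tactical differences are in parts (b) and (c).  For (b), the paper proceeds by taking a maximal subset $R'' \subseteq R'$ that lies in a single vertex group, choosing $x \in R' \setminus R''$, locating the closest vertex containing $x$, and using an edge on the connecting path to manufacture a complete separator of $R'$; your Helly argument on the family $\{\Psi_s : s \in R'\}$ reaches the same conclusion more conceptually, trading the ad hoc maximality argument for an appeal to a standard tree-intersection fact.  For (c), the paper shows that a proper subset $T' \subsetneq C_e$ fails to $(a,b)$-separate by explicitly building a path from $a$ through some $t \in C_e \setminus T'$ to $b$, using that $T'$ separates neither $R_u$ nor $R_v$; your non-mixing argument (forcing $R_u \subseteq T_1$ and $R_v \subseteq T_2$, hence $C_e \subseteq R_u \cap R_v \subseteq C'$) is cleaner and avoids the path construction.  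Parts (a) and (d) match the paper's reasoning essentially verbatim, modulo your subtree language.
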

\begin{proof}
If $(E,T)$ is an edge system of $\Psi$, then $T$ is a separating subset of $S$, 
since the graph of $\Psi$ is a tree. 
Let $(V,R)$ be a vertex system of $\Psi$. 
Clearly, every subset of $S$ that contains $R$ properly  
is separated by a complete subset $C$ of $S$ that is contained in some edge group of $\Psi$ 
that is incident to $V$. 
Therefore $R$ is a maximal subset of $S$ 
that is not separated by a complete subset, 
and so $R\in {\mathcal V}$.

Now suppose $R\in{\mathcal V}$. 
We claim that $\langle R\rangle$ is a vertex group of $\Psi$. 
Every element of $R$ is in some vertex group of $\Psi$.
Let $R'\subseteq R$ be a maximal subset of $R$ that is contained in
some vertex group of $\Psi$. If $R-R'\neq\emptyset$, 
say $x\in R-R'$, then $R'$ and $x$ are not both contained in a vertex group
of $\Psi$.  Take vertex groups $V$ and $V'$ of $\Psi$,
with $x\in V$ and $R'\subseteq V'$, which are closest together
in the graph of $\Psi$.  Let $E$ be an edge group of the path
between $V$ and $V'$. Then $E$ is generated by a complete subset $T$ of $S$ 
by assumption.  Let $C = R\cap T$.  Then $C$ is a complete subset of $S$. 
Now $x\notin C$ otherwise $x$ would also
be in a vertex group closer to $V'$ on the path between $V$ and $V'$. 
Likewise, $R'\not\subseteq C$ or else $R'$ would be contained
in a vertex group closer to $V$ on a path between $V$ and $V'$. 
But then $\Gamma(\langle R-C\rangle,R-C)$ would have at least two connected components, 
one containing $x$ and one containing some element of $R'-C$.  
This contradicts the assumption that $R\in{\mathcal V}$.  
Instead all of $R$ must be contained in a vertex group $V$ of $\Psi$. 
By the maximality of $R$, we have that $\langle R\rangle = V$. 

Let $(E,T)$ be an edge system of $\Psi$. 
As the graph of $\Psi$ is a tree, there are distinct vertex systems $(V_1,R_1)$ and $(V_2,R_2)$ 
such that $R_1\cap R_2 = T$.  As $\Psi$ is reduced, there is an $a\in R_1-T$ and a $b\in R_2-T$. 
As the graph of $\Psi$ is a tree, $T$ is an $(a,b)$-separator of $S$. 
Let $t\in T$, and let $T'$ be any subset of $T$ not containing $t$. Then $T'$ is complete.  
Hence $T'$ does not separate $R_1$ or $R_2$. 
Therefore there is a path in $\Gamma(\langle R_1-T'\rangle, R_1-T')$ from $a$ to $t$ 
and there is a path in $\Gamma(\langle R_2-T'\rangle, R_2-T')$ from $t$ to $b$. 
Thus $T'$ is not a $(a,b)$-separator of $S$. 
Hence $T$ is a minimal $(a,b)$-separator of $S$. 
Thus $T\in {\cal E}$. 

Finally, suppose $T\in{\mathcal E}$.  
Then $T$ is a minimal $(a,b)$-separator of $S$ for some $\{a,b\} \subseteq S-T$. 
Let $(S_1,T,S_2)$ be a separation of $S$ with $a\in S_1-T$ and $b\in S_2-T$.  
Each $R\in{\mathcal V}$ generates a vertex group of
$\Psi$ and is not separated by  any subset of $T$, 
and so each $R\in{\mathcal V}$
is contained in either $S_1$ or $S_2$.  

Pick vertex groups $V_1$
and $V_2$ as close together in $\Psi$ as possible such that
$V_1$ is generated by a subset of $S_1$ and $V_2$ is generated by
a subset of $S_2$.  Then $V_1$, and $V_2$ are adjacent since every
vertex group in a path between $V_1$ and $V_2$ is generated by a subset of
either $S_1$ or $S_2$.  Now $V_1\cap V_2$ is an edge group $E$ of
$\Psi$ which is generated by a subset $T'$ of $T$. 
The set $T'$ is an $(a,b)$-separator of $S$ since the graph of $\Psi$ is a tree. 
Hence $T' = T$, since $T$ is a minimal $(a,b)$-separator of $S$.  
Thus the sets in ${\cal E}$ generate the edge groups of $\Psi$. 
\end{proof}

$$\mbox{
\setlength{\unitlength}{.8cm}
\begin{picture}(12,6)(0,0)
\thicklines
\put(2,3){\circle*{.15}}
\put(2,3){\line(1,0){7}}
\put(5,3){\circle*{.15}}
\put(9,3){\circle*{.15}}
\put(5,3){\line(4,5){2}}
\put(5,3){\line(4,-5){2}}
\put(7,5.5){\circle*{.15}}
\put(7,.5){\circle*{.15}}
\put(7,5.5){\line(4,-5){2}}
\put(7,.5){\line(4,5){2}}
\put(1.4,2.9){$a$}
\put(4.7,2.3){$b$}
\put(7.1,5.8){$c$}
\put(7.1,0){$d$}
\put(9.3,2.9){$e$}
\end{picture}}$$
\centerline{\bf Figure 1.  A graph with five vertices $a, b, c, d, e$}

\begin{example}  Consider a Coxeter system $(W,S)$ such that the underlying graph 
of $\Gamma(W,S)$ is given in Figure 1. Then $(W,S)$ has two visual reduced JSJ 
decompositions over ${\cal FA}$, namely, 
$$W = \langle a,b\rangle\ast_{\langle b\rangle}\langle b,c,e\rangle\ast_{\langle b,e\rangle}\langle b,d,e\rangle, $$
$$W = \langle a,b\rangle\ast_{\langle b\rangle}\langle b,d,e\rangle\ast_{\langle b,e\rangle}\langle b,c,e\rangle. 
$$
By Theorem 2.5, both decompositions have the same vertex groups and the same edge groups. 
The only difference between the two decompositions is their graphs. 
In the first decomposition the edge group $\langle b\rangle$ is attached to the vertex group $\langle b,c,e\rangle$ whereas in the second decomposition, the edge group $\langle b\rangle$ is attached to the vertex group $\langle b,d,e\rangle$.  
The two decompositons are related by a slide move \cite{G-L} (Definition 7). 
It is worth noting that $\{b,e\}$ is a relative minimal separator of $S$, 
but $\{b,e\}$ is not a minimal separator of $S$, since $\{b\}$ separates $S$. 
\end{example}

\begin{remark}  Let $(W,S)$ be a Coxeter system of finite rank. 
The decomposition of the underlying graph of $\Gamma(W,S)$ determined by a visual reduced 
JSJ decomposition of $(W,S)$ over ${\cal FA}$ was first described in a graph theoretic context by Leimer \cite{Leimer}. 
In particular, an efficient algorithm for finding such a decomposition is given in Leimer's paper. 
It is interesting that such decompositions of graphs arise here naturally in the theory of Coxeter groups. 
\end{remark}

\section{Uniqueness of JSJ decompositions}  

We now turn our attention to the uniqueness of reduced JSJ decompositions 
of a Coxeter group $W$. 
Let ${\cal A}$ be a class of subgroups of $W$ which is closed with respect to taking subgroups 
and conjugation. 

\begin{theorem}  
Let $(W,S)$ be a Coxeter system of finite rank, 
and let $\Psi$ and $\Psi'$ be reduced JSJ decompositions of $W$ over ${\cal A}$. 
Then for each vertex group $V$ of $\Psi$, there is a unique vertex group $V'$ of $\Psi'$ 
such that $V$ is conjugate to $V'$ in $W$. 
Therefore the graphs of $\Psi$ and $\Psi'$ have the same number of vertices.
\end{theorem}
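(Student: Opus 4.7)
Since $\Psi$ and $\Psi'$ are both JSJ decompositions of $W$ over $\mathcal{A}$, each is minimal and universally elliptic, and each dominates every minimal universally elliptic graph of groups decomposition of $W$ over $\mathcal{A}$. In particular, $\Psi$ dominates $\Psi'$ and $\Psi'$ dominates $\Psi$. Translated into a statement about vertex groups, this means that every vertex group of $\Psi$ is contained in a conjugate of some vertex group of $\Psi'$, and symmetrically every vertex group of $\Psi'$ is contained in a conjugate of some vertex group of $\Psi$.

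Given a vertex group $V$ of $\Psi$, I would chain these two containments: choose a vertex group $V'$ of $\Psi'$ and $g\in W$ with $V\subseteq gV'g^{-1}$, then a vertex group $V''$ of $\Psi$ and $h\in W$ with $V'\subseteq hV''h^{-1}$, so that
$$V\;\subseteq\;gV'g^{-1}\;\subseteq\;(gh)V''(gh)^{-1}.$$
The plan is to collapse this chain to equalities. Working in the Bass-Serre tree $T$ of $\Psi$, let $v$ and $u$ be vertices whose $W$-stabilizers are $V$ and $V''$, respectively. The above containment shows that $V$ fixes both $v$ and $(gh)\cdot u$, and since the fixed point set of a subgroup in a tree is a subtree, $V$ fixes the entire geodesic between them. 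If $v\neq (gh)\cdot u$, this geodesic contains an edge $e$ incident to $v$, and then $V\subseteq W_e\subseteq W_v=V$, forcing $V=W_e$. By Lemma A of Bass, the graph of $\Psi$ is a tree, so $e$ has distinct endpoints in the graph, and the equality $V=W_e$ contradicts reducedness of $\Psi$. Hence $v=(gh)\cdot u$, which gives $V=(gh)V''(gh)^{-1}$; the chain must collapse, so $V=gV'g^{-1}$, yielding the required conjugate vertex group $V'$ of $\Psi'$.

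For uniqueness, suppose $V$ is conjugate to two vertex groups $V'_1$ and $V'_2$ of $\Psi'$. Then $V'_1$ is conjugate to $V'_2$, and running exactly the same fixed-subtree argument inside the Bass-Serre tree of $\Psi'$ (using that $\Psi'$ is reduced) forces the vertices of the graph of $\Psi'$ corresponding to $V'_1$ and $V'_2$ to coincide, so $V'_1=V'_2$. Thus $V\mapsto V'$ is a well-defined map from vertex groups of $\Psi$ to vertex groups of $\Psi'$, and exchanging the roles of $\Psi$ and $\Psi'$ furnishes the inverse map; in particular, the two graphs have the same number of vertices.

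The main obstacle is the tree-collapse step, where reducedness must be leveraged precisely to promote conjugate containment to conjugacy; the Coxeter hypothesis enters only narrowly, via Bass's lemma, to guarantee that the graph of $\Psi$ has no loops and hence that $V=W_e$ is genuinely forbidden by reducedness.
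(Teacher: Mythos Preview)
Your proof is correct and follows the same overall strategy as the paper: use mutual domination to obtain the chain $V\subseteq gV'g^{-1}\subseteq (gh)V''(gh)^{-1}$, then collapse it using reducedness. The only difference is that the paper collapses the chain (and obtains uniqueness) by invoking Lemma~3 of \cite{M-T} as a black box, whereas you unpack that step directly via the fixed-subtree argument in the Bass--Serre tree; the two arguments are essentially the same, with yours being self-contained.
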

\begin{proof}
Let $V$ be a vertex group of $\Psi$.  Then there is a $w\in W$ and a vertex group $V'$ of $\Psi'$ 
such that $V \subseteq wV'w^{-1}$, since $\Psi$ dominates $\Psi'$ by Theorem 12 of \cite{G-L}. 
Moreover, there is a $w'\in W$ and a vertex group $V''$ of $\Psi$ such that $V' \subseteq w'V''w'^{-1}$, 
since $\Psi'$ dominates $\Psi$ by Theorem 12 of \cite{G-L}.  
Hence $V \subseteq ww'V''(ww')^{-1}$.  
By Lemma 3 of \cite{M-T}, we have that $V = V''$ and $ww'\in V$. 
As $V \subseteq wV'w^{-1} \subseteq ww'V(ww')^{-1} = V$, 
we have that $V = wV'w^{-1}$; moreover $V'$ is unique by Lemma 3 of \cite{M-T}. 
\end{proof}

\begin{lemma} 
Let $(W,S)$ be a Coxeter system of finite rank. 
Then $S$ is separated by a complete subset if and only if 
$W$ has a nontrivial amalgamated product decomposition over a subgroup in ${\cal FA}$. 
\end{lemma}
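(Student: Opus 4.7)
My plan is to prove the two directions separately, each as a short application of material already established in the excerpt.

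For the ``only if'' direction, suppose $S_0$ is a complete subset of $S$ that separates $S$ via a separation $(S_1,S_0,S_2)$. The paragraph preceding Lemma 2.1 already exhibits the visual amalgamated product $W = \langle S_1\rangle *_{\langle S_0\rangle} \langle S_2\rangle$. This decomposition is nontrivial: $S_1-S_0$ and $S_2-S_0$ are both non-empty by the definition of a separator, and the standard fact that for $T\subseteq S$ the intersection $\langle T\rangle\cap S$ equals $T$ implies $\langle S_0\rangle\subsetneq \langle S_1\rangle$ and $\langle S_0\rangle\subsetneq \langle S_2\rangle$. The edge group $\langle S_0\rangle$ is a complete visual subgroup, hence has property FA by the criterion recalled in the introduction, and therefore lies in ${\cal FA}$.

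For the ``if'' direction, I argue the contrapositive: assume $S$ is not separated by a complete subset, and show that $W$ admits no nontrivial amalgamated product decomposition over a subgroup in ${\cal FA}$. Under this assumption, the trivial graph of groups decomposition $\Psi$ of $W$ with one vertex $W$ and no edges satisfies the hypotheses of Lemma 2.3: it is visual and reduced, its unique vertex system $(W,S)$ has $S$ not separated by a complete subset by hypothesis, and the edge-group condition is vacuous. Now suppose $W = A *_H B$ is a nontrivial amalgamated product with $H\in{\cal FA}$, viewed as a graph of groups decomposition $\Phi$ with two vertices and one edge. Applying Lemma 2.3 to $\Psi$ and $\Phi$, the vertex group $W$ of $\Psi$ must be contained in a conjugate $w V' w^{-1}$ of some vertex group $V'\in\{A,B\}$ of $\Phi$. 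But $V'\subseteq W$, so $wV'w^{-1}\subseteq W$, which forces $V'=W$. In a nontrivial amalgamation both $A$ and $B$ are proper subgroups of $W$ (otherwise $H$ would equal the other factor and the decomposition would be trivial), giving the required contradiction.

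The argument has no substantive obstacle; the main thing to be careful about is the bookkeeping that the trivial decomposition legitimately meets the hypotheses of Lemma 2.3 (the edge-group condition being vacuous) and that in a genuinely nontrivial amalgamated product decomposition of $W$ both vertex groups are proper, so that no conjugate of a vertex group of $\Phi$ can contain $W$.
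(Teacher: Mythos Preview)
Your proof is correct and follows essentially the same approach as the paper's own proof: both directions match, with the key step in the ``if'' direction being the application of Lemma~2.3 (using the trivial one-vertex decomposition of $(W,S)$ as $\Psi$) to force $W$ into a conjugate of a proper vertex group of the amalgamated product, yielding the contradiction. Your version simply spells out in more detail what the paper compresses into a single sentence.
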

\begin{proof}
Suppose $S_0$ is a complete separator of $S$. 
Then there is a separation $(S_1,S_0,S_2)$ of $S$ 
and we have a nontrivial amalgamated product decomposition
$W=\langle S_1\rangle*_{\langle S_0\rangle}\langle S_2\rangle$ with $\langle S_0\rangle \in {\cal FA}$. 

Conversely, suppose $W$ has a nontrivial amalgamated product decomposition
$W=A*_C B$ with $C \in {\cal FA}$, and on the contrary, $S$ has no complete separator. 
By Lemma 2.3, we have that $W$ is contained in a conjugate of $A$ or $B$,  
which is a contradiction. 
Therefore $S$ has a complete separator. 
\end{proof}

The next theorem together with Theorem 2.4 characterize a visual reduced JSJ decomposition over 
${\cal FA}$ of a Coxeter system $(W,S)$ of finite rank.  

\begin{theorem} 
Let $\Psi$ be a visual reduced JSJ decomposition of over ${\cal FA}$ of a Coxeter system $(W,S)$ of finite rank. Then  for each vertex system $(V,R)$ of $\Psi$, the set $R$ is not separated by a complete subset, and each edge group of $\Psi$ is a complete visual subgroup of  $(W,S)$. 
\end{theorem}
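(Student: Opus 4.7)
The plan is to deduce both conclusions by comparing $\Psi$ with the visual reduced JSJ decomposition constructed in Lemma 2.2 and Theorem 2.4, whose vertex and edge systems are already fully described. Let $\Psi_0$ denote that decomposition; by Theorem 2.5 its vertex groups are precisely the groups $\langle R^0\rangle$ with $R^0 \in {\cal V}$, and its edge groups are complete visual subgroups. The two technical tools I will use are Theorem 3.1, which matches the vertex groups of $\Psi$ with those of $\Psi_0$ up to conjugation, and Lemma 4.3 of \cite{M-R-T}, which upgrades a conjugation of visual subgroups to a pointwise conjugation of their distinguished generating sets.

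For the edge-group claim, let $(E,T)$ be an edge system of $\Psi$. Since $\Psi$ is a JSJ over ${\cal FA}$, the group $E$ lies in ${\cal FA}$, so Lemma 25 of \cite{M-T} places $E$ inside a conjugate of some complete visual subgroup $\langle C\rangle$. Lemma 4.3 of \cite{M-R-T} then produces $w \in W$ and $T' \subseteq C$ with $wTw^{-1} = T'$. Conjugation in $W$ preserves the orders $m(\cdot,\cdot)$, so the induced subgraphs of $\Gamma(W,S)$ on $T$ and on $T'$ coincide as labeled graphs; as $T' \subseteq C$ is complete, so is $T$.

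For the vertex-group claim, let $(V,R)$ be a vertex system of $\Psi$. By Theorem 3.1 applied to the reduced JSJ decompositions $\Psi$ and $\Psi_0$, there is a vertex group $U = \langle R^0\rangle$ of $\Psi_0$, with $R^0 \in {\cal V}$, such that $\langle R\rangle = w\langle R^0\rangle w^{-1}$ for some $w \in W$. Applying Lemma 4.3 of \cite{M-R-T} in both directions---first to $\langle R\rangle \subseteq w\langle R^0\rangle w^{-1}$ and then to $\langle R^0\rangle \subseteq w^{-1}\langle R\rangle w$---yields subsets $R'' \subseteq R^0$ and $R''' \subseteq R$ together with elements of $W$ that pointwise conjugate $R$ onto $R''$ and $R^0$ onto $R'''$. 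Comparing cardinalities forces $|R| = |R^0|$ and hence $R'' = R^0$, so there exists $v \in W$ with $vRv^{-1} = R^0$. Conjugation by $v$ is thus a labeled-graph isomorphism between the induced subgraphs of $\Gamma(W,S)$ on $R$ and on $R^0$. Since ``being separated by a complete subset'' is a property of this labeled graph, the fact that $R^0 \in {\cal V}$ is not so separated forces the same for $R$.

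The delicate point---passing from conjugacy of the visual subgroups $\langle R\rangle$ and $\langle R^0\rangle$ to pointwise conjugacy of the generating sets $R$ and $R^0$---is exactly what a double application of Lemma 4.3 of \cite{M-R-T}, combined with the cardinality comparison, is designed to deliver, so I expect the execution to go through with only the bookkeeping indicated above.
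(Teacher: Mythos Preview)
Your argument is correct, and the edge-group paragraph is exactly the paper's proof. For the vertex-group paragraph, however, you take a genuinely different route.

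The paper invokes Lemma~3.2, which says that a visual subset $R$ is separated by a complete subset if and only if $\langle R\rangle$ admits a nontrivial amalgamated product decomposition over a subgroup in ${\cal FA}$. This recasts ``$R$ is not separated by a complete subset'' as an intrinsic group-theoretic property of $\langle R\rangle$, so once Theorem~3.1 gives $V$ conjugate to a vertex group $V'$ of the reference decomposition $\Psi_0$, the property transfers for free. You instead bypass Lemma~3.2 entirely: from $V = wV'w^{-1}$ you use Lemma~4.3 of \cite{M-R-T} in both directions plus a cardinality count to obtain a \emph{pointwise} conjugacy $vRv^{-1} = R^0$, and then transfer the purely graph-theoretic property along the induced labeled-graph isomorphism. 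This works because conjugation preserves all the orders $m(\cdot,\cdot)$ and hence the induced P-diagram on $R$. Your approach is a bit more hands-on but has the mild advantage of not relying on Lemma~3.2 (whose ``only if'' direction already needs Lemma~2.3 and the Mihalik--Tschantz visual decomposition theorem); the paper's approach is shorter and more conceptual, since it never needs to promote conjugacy of subgroups to conjugacy of generating sets.
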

\begin{proof}
By Lemma 2.2, the system $(W,S)$ has a visual reduced graph of groups decomposition $\Psi'$ 
such that for each vertex system $(V',R')$ of $\Psi'$, the set $R'$ is not separated by a complete subset, 
and such that each edge group of $\Psi'$ is a complete visual subgroup of $(W,S)$. 
By Theorem 2.4, the decomposition $\Psi'$ is a JSJ decomposition of $W$ over the class ${\cal FA}$. 
Let $(V,R)$ be a vertex system of $\Psi$.  By Theorem 3.1, there is a vertex system $(V',R')$ of $\Psi'$ 
such that $V$ is conjugate to $V'$.  By Lemma 3.2, the group $V'$ is indecomposable as 
a nontrivial amalgamated product over a subgroup in ${\cal FA}$. 
Hence $V$ is indecomposable as a nontrivial amalgamated product over a subgroup in ${\cal FA}$. 
By Lemma 3.2, the set $R$ is not separated by a complete subset. 

Let $(E,T)$ be an edge system of $\Psi$.  Then $E\in {\cal FA}$. 
Hence $E$ is contained in a FA subgroup $H$ of $W$. 
By Lemma 25 of \cite{M-T}, there is a complete subset $C$ of $S$ and a $w\in W$ 
such that $H \subseteq w\langle C\rangle w^{-1}$. 
Hence $E \subseteq w\langle C\rangle w^{-1}$. 
By Lemma 4.3 of \cite{M-R-T}, the set $T$ is conjugate to a subset of $C$. 
Hence $T$ is complete and $E$ is a complete visual subgroup $(W,S)$. 
\end{proof}

Let $(W,S)$ be a Coxeter system of finite rank.  
A subset $S_0$ of $S$ is a {\it c-minimal separator} of $S$ 
if $S_0$ separates $S$ and no conjugate of another subset of $S_0$ separates $S$. 
Note that if $S_0$ is a c-minimal separator of $S$, then $S_0$ is a minimal separator of $S$. 
Also if $S_0$ and $S_0'$ are conjugate separators of $S$, then $S_0$ is c-minimal if and only 
if $S_0'$ is c-minimal. 

$$\mbox{
\setlength{\unitlength}{.8cm}
\begin{picture}(12,5)(0,0)
\thicklines
\put(2,3){\circle*{.15}}
\put(2,3){\line(1,0){3}}
\put(5,3){\circle*{.15}}
\put(9,3){\circle*{.15}}
\put(5,3){\line(4,3){2}}
\put(5,3){\line(4,-3){2}}
\put(7,4.5){\circle*{.15}}
\put(7,1.5){\circle*{.15}}
\put(7,1,5){\line(0,1){3}}
\put(7,4.5){\line(4,-3){2}}
\put(7,1.5){\line(4,3){2}}
\put(1.4,2.9){$a$}
\put(3.4,3.3){$3$}
\put(4.7,2.3){$b$}
\put(5.7,4){$3$}
\put(5.7,1.7){$2$}
\put(6.9,4.9){$c$}
\put(7.3,2.8){$3$}
\put(8.1,4){$2$}
\put(6.9,.9){$d$}
\put(8.1,1.7){$2$}
\put(9.3,2.9){$e$}
\end{picture}}$$
\centerline{\bf Figure 2.  The P-diagram of a Coxeter system}

\begin{example}  Consider the Coxeter system $(W,S)$ whose P-diagram  is given in Figure 2. 
Observe that $\{c,d\}$ is a minimal separator of $S$, but $\{c,d\}$ is not a c-minimal separator of $S$, 
since $c$ is conjugate to $b$ and $\{b\}$ separates $S$. 
\end{example}

\begin{lemma}  
Let $(W,S)$ be a Coxeter system of finite rank, 
and let $S'$ be another set of Coxeter generators of $W$. 
If $S_0$ is a c-minimal separator of $S$, then there exists a c-minimal separator of $S'_0$ of $S'$ 
such that $\langle S_0\rangle$ is conjugate to $\langle S_0'\rangle$ in $W$. 
\end{lemma}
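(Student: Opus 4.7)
The plan is to transport the visual amalgamated product decomposition associated to $S_0$ over to the generating set $S'$, and then to use the c-minimality of $S_0$ together with a symmetry argument to identify the edge group exactly.

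Let $(S_1, S_0, S_2)$ be a separation of $S$, giving $W = \langle S_1\rangle *_{\langle S_0\rangle} \langle S_2\rangle$.  I will invoke the general visual decomposition machinery of Mihalik--Tschantz from \cite{M-T} (Theorem 1 of \cite{M-T} applied without a property FA hypothesis on edge groups) to produce a nontrivial reduced visual graph of groups decomposition $\Lambda$ of $(W, S')$ whose vertex groups lie in $W$-conjugates of $\langle S_1\rangle$ or $\langle S_2\rangle$ and whose edge groups lie in $W$-conjugates of $\langle S_0\rangle$.  Nontriviality of $\Lambda$ is forced because $\langle S_1\rangle$ and $\langle S_2\rangle$ are proper subgroups of $W$.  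Fix an edge of $\Lambda$ with visual generators $T' \subseteq S'$; then $T'$ separates $S'$, and Lemma 4.3 of \cite{M-R-T} produces a subset $T \subseteq S_0$ such that $T'$ is $W$-conjugate to $T$.

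Next, I run the same transport in reverse: starting from a separation $(S_1', T', S_2')$ of $S'$ and the induced amalgamated product over $\langle T'\rangle$, I transport to the generating set $S$ to obtain a separation $(\tilde S_1, \tilde S_0, \tilde S_2)$ of $S$ with $\langle\tilde S_0\rangle$ contained in a $W$-conjugate of $\langle T'\rangle$.  Then $\tilde S_0$ is $W$-conjugate to some $T^\ast \subseteq T \subseteq S_0$ by Lemma 4.3 of \cite{M-R-T}.  Since $\tilde S_0$ separates $S$ and $S_0$ is c-minimal, $T^\ast$ cannot be a proper subset of $S_0$, so $T^\ast = S_0$.  This forces $T = S_0$, and hence $\langle T'\rangle$ is conjugate to $\langle S_0\rangle$ in $W$.

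Finally, to confirm that $T'$ is c-minimal in $S'$, suppose some proper subset $T'' \subsetneq T'$ had a $W$-conjugate which is a subset of $S'$ separating $S'$.  Applying the same transport argument to $T''$ in place of $T'$ yields a subset of $S$ that is $W$-conjugate to a proper subset of $S_0$ and separates $S$, contradicting the c-minimality of $S_0$.  Setting $S_0' := T'$ completes the proof.  The main obstacle is the first step: because $\langle S_0\rangle$ need not lie in ${\cal FA}$ (for example when $S_0=\{c,d\}$ with $m(c,d)=\infty$), Lemma 2.3 is not available, and one must appeal to the more general Mihalik--Tschantz transport theorem for arbitrary graph of groups decompositions; once that tool is in hand, the remaining minimality and symmetry chase is routine.
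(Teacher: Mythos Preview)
Your argument is correct, but it follows a different path from the paper's.  The paper invokes Theorem 6.1 of \cite{M-R-T} as a single black box: that matching theorem, applied to the splitting $W=\langle S_1\rangle\ast_{\langle S_0\rangle}\langle S_2\rangle$, immediately supplies a c-minimal separator $S_0''$ of $S$ and a c-minimal separator $S_0'$ of $S'$ with $\langle S_0''\rangle$ conjugate to $\langle S_0'\rangle$ and $\langle S_0''\rangle$ conjugate into $\langle S_0\rangle$; one then uses Lemma 4.3 of \cite{M-R-T} and the c-minimality of $S_0$ to force $S_0''$ conjugate to $S_0$, and the conclusion follows in two lines.  You instead rebuild this correspondence by hand, applying the visual decomposition theorem (Theorem 1 of \cite{M-T}) twice---once from $S$ to $S'$ and once back---and then running a separate minimality chase to pin down both the conjugacy class of the edge group and the c-minimality of $T'$.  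What your route buys is that it avoids the heavier matching machinery of \cite{M-R-T} and makes explicit why no property FA hypothesis on $\langle S_0\rangle$ is needed (your closing remark is well taken: Lemma 2.3 of the present paper is genuinely unavailable here).  What the paper's route buys is brevity, since Theorem 6.1 of \cite{M-R-T} already packages the back-and-forth transport and the c-minimality of the output.
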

\begin{proof}
Let $(S_1,S_0,S_2)$ be a separation of $S$. 
Then we have a nontrivial visual amalgamated product decomposition
$W=\langle S_1\rangle*_{\langle S_0\rangle}\langle S_2\rangle$. 
By Theorem 6.1 of \cite{M-R-T}, there is a c-minimal separator $S_0''$ of $S$ 
and a c-minimal separator $S_0'$ of $S'$ such that $\langle S_0''\rangle$ 
is conjugate to $\langle S_0'\rangle$ and $\langle S_0''\rangle$ 
is conjugate to a subgroup of $\langle S_0\rangle$. 
By Lemma 4.3 of \cite{M-R-T}, we have that $S_0''$ is conjugate to a subset of $S_0$. 
As $S_0$ is a c-minimal separator of $S$, we deduce that $S_0''$ is conjugate to $S_0$. 
Hence $\langle S_0\rangle$ is conjugate to $\langle S_0'\rangle$ in $W$. 
\end{proof}

We now turn our attention to the uniqueness of the edge groups of a visual 
reduced JSJ decomposition over ${\cal FA}$ of a Coxeter system. 

\begin{theorem}  
Let $(W,S)$ be a Coxeter system of finite rank, 
and let $\Psi$ be a visual reduced JSJ decomposition of $(W,S)$ over ${\cal FA}$. 
Let $S'$ be another set of Coxeter generators of $W$, 
and let $\Psi'$ be a visual reduced JSJ decomposition of $(W,S')$ over ${\cal FA}$. 
Then for each edge group $E$ of $\Psi$, 
there is an edge group $E'$ of $\Psi'$ such that $E$ is conjugate to $E'$ in $W$.  
Moreover for each edge system $(E,T)$ of $\Psi$ such that $T$ is a c-minimal separator of $S$, 
there is an edge system $(E',T')$ of $\Psi'$ such that $E$ is conjugate to $E'$ and $T'$ 
is a c-minimal separator of $S'$. 
\end{theorem}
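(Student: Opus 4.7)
The plan is to prove the ``moreover'' clause first, as a direct application of Lemma 3.4, and then to deduce the first assertion from the deformation space theory of \cite{G-L}.

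For the ``moreover'' clause, given an edge system $(E,T)$ of $\Psi$ with $T$ a c-minimal separator of $S$, Theorem 3.3 gives that $T$ is complete, so $E = \langle T\rangle$ has property FA. Lemma 3.4 then produces a c-minimal separator $T'$ of $S'$ with $\langle T\rangle$ conjugate to $\langle T'\rangle$ in $W$; put $E' = \langle T'\rangle$. Since property FA is conjugation-invariant, $E'$ has FA and so $T'$ is complete. Being c-minimal, $T'$ is in particular a minimal, hence relative minimal, separator of $S'$. Theorem 3.3 applied to $\Psi'$ ensures that the hypotheses of Theorem 2.5 hold for $(W,S')$, and Theorem 2.5 then gives that $T'$ generates an edge group of $\Psi'$. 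Thus $(E', T')$ is the required edge system.

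For the first clause, $\Psi$ and $\Psi'$ are both reduced JSJ decompositions of the same group $W$ over the same class ${\cal FA}$. Theorem 12 of \cite{G-L}, applied in both directions as in the proof of Theorem 3.1, gives that $\Psi$ and $\Psi'$ mutually dominate each other, so they have the same elliptic subgroups and lie in a common deformation space. Reduced graphs of groups in the same deformation space are related by sequences of slide moves and conjugations, and slide moves preserve the conjugacy classes of edge groups. Hence the multisets of conjugacy classes of edge groups of $\Psi$ and $\Psi'$ coincide, and each edge group of $\Psi$ is conjugate in $W$ to an edge group of $\Psi'$.

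The main obstacle is the first clause. The ``moreover'' clause is a transparent consequence of Lemma 3.4 and Theorems 2.5 and 3.3, but the first clause genuinely asserts an invariance of conjugacy classes of edge groups across different choices of Coxeter generating set that does not follow from the ``maximal subgroup'' argument used for Theorem 3.1. A direct Bass--Serre approach --- letting $E$ fix the geodesic in the Bass--Serre tree of $\Psi'$ between the $\Psi'$-vertices matched by Theorem 3.1 to the $\Psi$-vertex groups adjacent to the edge of $E$ --- only yields $E \subseteq g\langle T'\rangle g^{-1}$ for some edge group $\langle T'\rangle$ of $\Psi'$. In general this inclusion can be strict: for instance in the Coxeter system of Figure 2, $\langle b\rangle$ and $\langle c,d\rangle$ are both edge groups, yet $\langle b\rangle$ is conjugate in $W$ to the proper subgroup $\langle c\rangle$ of $\langle c,d\rangle$. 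Invoking the slide-invariance of edge groups within a deformation space is what bypasses this difficulty.
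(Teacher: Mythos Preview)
Your proof of the ``moreover'' clause is correct and coincides with the paper's argument.

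For the first clause you take a different route from the paper. The paper argues via the Tits--Bass--Guirardel--Levitt type classification: it rules out the abelian types (since the abelianization of $W$ is finite), uses Proposition~3.10 of \cite{G-L2} to see that $\Psi$ and $\Psi'$ have the same type, handles the dihedral type by a bare-hands argument showing that the edge group is normal and uniquely determined up to conjugacy (via \cite{Paris} and Lemma~3.4), and invokes Corollary~7.3 of \cite{G-L2} only in the irreducible case, after noting that the graphs of $\Psi$ and $\Psi'$ being trees makes them non-ascending.

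Your approach --- placing $\Psi$ and $\Psi'$ in a common deformation space and appealing to slide-invariance of edge-group conjugacy classes --- is more uniform, but as written it has a gap. The assertion that any two reduced trees in a deformation space are related by slide moves is not a general fact; it is the content of Theorem~7.2 of \cite{G-L2} (or Forester's rigidity theorem) and requires the deformation space to be \emph{non-ascending}. You do not verify this hypothesis. It does hold here: since $W$ has finite abelianization, Lemma~A of \cite{Bass} forces the quotient graph of every $W$-tree to be a tree, so no tree in the JSJ deformation space carries a loop, let alone an ascending one. With that observation inserted, your argument goes through and treats the dihedral and irreducible cases simultaneously, bypassing the paper's case split; the paper's approach, by contrast, quotes results from \cite{G-L2} exactly as stated there (Corollary~7.3 sitting in the irreducible context), at the price of the separate dihedral computation.
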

\begin{proof} 
According to Tits \cite{Tits}, Bass \cite{Bass}, and Guirardel and Levitt \cite{G-L2}, 
there are five possible types of reduced JSJ decompositions of $W$ over ${\cal FA}$, 
namely, trivial, dihedral, linear abelian, genuine abelian, and irreducible.  
The abelian types do not apply to $W$, since the abelianization of $W$ is finite. 
By Proposition 3.10 of \cite{G-L2}, the decompositions $\Psi$ and $\Psi'$ have the same type. 
If $\Psi$ and $\Psi'$ are both trivial, then they have no edge groups. 

Suppose that $\Psi$ and $\Psi'$ are dihedral. 
By Theorem 6 of \S I.4 of \cite{Serre}, we deduce that 
$\Psi$ corresponds to a nontrivial visual amalgamated product decomposition 
$W = \langle A\rangle\ast_{\langle C\rangle} \langle B\rangle$ with $C$ complete 
and $\langle C\rangle$ of index two in both $\langle A\rangle$ and $\langle B\rangle$, 
and $\Psi'$ corresponds to a nontrivial visual amalgamated product decomposition 
$W = \langle A'\rangle\ast_{\langle C'\rangle} \langle B'\rangle$ with $C'$ complete 
and $\langle C'\rangle$ of index two in both $\langle A'\rangle$ and $\langle B'\rangle$. 
Now $\langle C\rangle$ and $\langle C'\rangle$ are normal in $W$. 
By the main result of \cite{Paris}, we deduce that $A-C = \{a\}$ and $\langle A\rangle = \langle a\rangle \times\langle C\rangle$, and $B-C = \{b\}$ and $\langle B\rangle = \langle b\rangle\times\langle C\rangle$, 
and $A'-C' = \{a'\}$ and $\langle A'\rangle = \langle a'\rangle \times\langle C'\rangle$, and $B'-C' = \{b'\}$ and $\langle B'\rangle = \langle b'\rangle\times\langle C'\rangle$. 
Hence $A, B, A', B'$ are all complete.  
Therefore $C$ is the unique separator of $S$ and $C'$ is the unique separator of $S'$. 
Hence $\langle C\rangle$ is conjugate to $\langle C'\rangle$ by Lemma 3.4. 

Now assume that $\Psi$ and $\Psi'$ are irreducible. 
Then $\Psi$ and $\Psi'$ are non-ascending \cite{G-L2}, since the graphs of $\Psi$ and $\Psi'$ are trees, 
and so for each edge group $E$ of $\Psi$, 
there is an edge group $E'$ of $\Psi'$ such that $E$ is conjugate to $E'$ in $W$ by 
Corollary 7.3 of \cite{G-L2}.   

Let $(E,T)$ be an edge system of $\Psi$ such that $T$ is a c-minimal separator of $S$. 
Then there is a c-minimal separator $T'$ of $S'$ such that $E$ is conjugate to $ \langle T'\rangle$ by Lemma 3.4.  As $E$ has property FA, we have that $\langle T'\rangle$ has property FA. 
Therefore $T'$ is complete.  Hence $T'$ generates an edge group $E'$ of $\Psi'$ by Theorem 2.5. 
\end{proof}

\vspace{-.2in}
\begin{remark}  Let $\Psi$ and $\Psi'$ be as in Theorem 3.5.  
It is not necessary that $\Psi$ and $\Psi'$ have the same number of edge groups. 
For M\"uhlherr's example \cite{M}, we have $\Psi$ with one edge group and $\Psi'$ with two edge groups. 
It is also not necessary that a minimal edge group of $\Psi$ is conjugate to a minimal edge group of $\Psi'$. 
We will give an example below. 
\end{remark}

\vspace{.2in}
Let $(W,S)$ be a Coxeter system of finite rank.  
Suppose that $S_1,S_2\subseteq S$, with $S=S_1\cup S_2$ and $S_0=S_1\cap S_2$,  
are such that $m(a,b)=\infty$ for all $a\in S_1-S_0$ and $b\in S_2-S_0$. 
Let $\ell \in \langle S_0\rangle$ such that 
$\ell S_0\ell^{-1}=S_0$. 
The triple $(S_1,\ell, S_2)$ determines an {\it elementary twist} 
of $(W,S)$ giving a new Coxeter generating set $S'=S_1\cup \ell S_2\ell^{-1}$ of $W$. 
Note that if $S_1 \subseteq S_2$, then $S'=\ell S\ell^{-1}$.

$$\mbox{
\setlength{\unitlength}{.8cm}
\begin{picture}(12,6.4)(0,-.2)
\thicklines
\put(2,3){\circle*{.15}}
\put(2,3){\line(1,0){7}}
\put(5,3){\circle*{.15}}
\put(9,3){\circle*{.15}}
\put(5,3){\line(4,5){2}}
\put(5,3){\line(4,-5){2}}
\put(7,5.5){\circle*{.15}}
\put(7,.5){\circle*{.15}}
\put(7,5.5){\line(4,-5){2}}
\put(7,.5){\line(4,5){2}}
\put(1.4,2.9){$a$}
\put(3.4,3.3){$3$}
\put(3,2.3){$b=\ell d\ell^{-1}$}
\put(5.7,4.5){$2$}
\put(5.7,1.2){$2$}
\put(7,3.3){$3$}
\put(6.6,5.9){$\ell e\ell^{-1}$}
\put(8.1,4.5){$2$}
\put(5.7,-.2){$d=\ell b\ell^{-1}$}
\put(8.1,1.2){$3$}
\put(9.3,2.9){$c=\ell c\ell^{-1}$}
\end{picture}}$$
\centerline{\bf Figure 3.  The P-diagram of a twisted Coxeter system}

\begin{example}  Consider the Coxeter system $(W,S)$ whose P-diagram is given in Figure 2. 
Let $\ell$ be the longest element of the visual subgroup $\langle b,c,d\rangle$. 
Then $(\{a,b,c,d\},\ell, \{b,c,d,e\})$ is an elementary twist of $(W,S)$. 
The P-diagram of the twisted system $(W,S')$ is given in Figure 3. 
Let $\Psi$ be the unique visual reduced JSJ decomposition of $(W,S)$ over ${\cal FA}$, 
and let $\Psi'$ be one of the two visual reduced JSJ decompositions of $(W,S')$ over ${\cal FA}$. 
The minimal edge group $\langle c, d\rangle$ of $\Psi$ is conjugate to the edge group $\langle b, c\rangle$ 
of $\Psi'$.  The edge group $\langle b,c\rangle$ is not minimal, since $\langle b\rangle$ 
is an edge group of $\Psi'$. 
\end{example}

\section{Chordal Coxeter Groups}

A graph is said to be {\it chordal} if every cycle of length at least four has a chord. 
For example, the graph in Figure 1 is chordal. 
A Coxeter system $(W,S)$ is said to be {\it chordal} if the underlying graph of the P-diagram of $(W,S)$ 
is chordal.  

\begin{theorem}  
Let $(W,S)$ be a Coxeter system of finite rank, 
and let $\Psi$ be a visual reduced JSJ decomposition of $(W,S)$ over ${\cal FA}$. 
Then $(W,S)$ is chordal if and only if each vertex group of $\Psi$ is a complete 
visual subgroup of $(W,S)$. 
\end{theorem}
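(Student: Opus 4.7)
The plan is to reduce the claim to a purely graph-theoretic statement by invoking Theorem 2.5: the vertex groups of $\Psi$ are exactly the visual subgroups $\langle R\rangle$ where $R$ ranges over the maximal subsets of $S$ not separated by a complete subset. The statement thus becomes: $\Gamma(W,S)$ is chordal if and only if every such maximal $R$ is complete. I would prove this via Dirac's classical characterization of chordal graphs, namely that a graph is chordal if and only if, for every pair of non-adjacent vertices $a, b$, every minimal $(a,b)$-separator is a clique.

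For the forward direction, assume $(W,S)$ is chordal and let $R$ be a maximal subset of $S$ not separated by a complete subset. Suppose for contradiction that $R$ is not complete, so there exist $a, b \in R$ with $m(a,b) = \infty$. The induced subgraph $\Gamma(\langle R\rangle, R)$ is an induced subgraph of the chordal graph $\Gamma(W,S)$ and is therefore chordal. If $a$ and $b$ lie in different connected components of $\Gamma(\langle R\rangle, R)$, then $\emptyset$ --- which is complete --- separates $R$, a contradiction. Otherwise, Dirac's theorem yields a minimal $(a,b)$-separator $T \subseteq R \setminus \{a, b\}$ that is a clique in $\Gamma(\langle R\rangle, R)$, and hence is a complete subset of $S$ separating $R$, again a contradiction.

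For the converse I would argue by contraposition: assume $(W,S)$ is not chordal and exhibit a non-complete vertex group. Fix a chordless cycle $C$ of length $n \geq 4$ in $\Gamma(W,S)$ with vertex set $V(C)$. The induced subgraph on $V(C)$ is exactly $C$, so any complete $T \subseteq V(C)$ has $|T| \leq 2$ --- three pairwise adjacent vertices of $V(C)$ would form a triangle, forcing a chord of $C$. Since removing at most two consecutive vertices of a cycle of length at least $4$ leaves a connected path, no complete subset of $V(C)$ separates $V(C)$. Hence $V(C)$ extends to a maximal subset $R \subseteq S$ that is not separated by a complete subset. By Theorem 2.5, $\langle R\rangle$ is a vertex group of $\Psi$, but $R \supseteq V(C)$ is not complete.

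The main obstacle is less an obstacle than a technicality: one must apply Dirac's theorem inside the induced subgraph on $R$, using the elementary facts that chordality passes to induced subgraphs and that a clique in $\Gamma(\langle R\rangle, R)$ is automatically a complete subset of $S$. Both are immediate from the definitions, so the real content of the argument lies in identifying the vertex groups via Theorem 2.5 and matching Dirac's graph-theoretic characterization to the Coxeter-theoretic notion of being separated by a complete subset.
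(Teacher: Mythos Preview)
Your argument is correct, and it takes a genuinely different route from the paper's.  The paper proves both directions by induction: for the forward direction it inducts on $|S|$, using Dirac's theorem that every minimal vertex separator of a chordal graph is a clique to split $(W,S)$ along a complete minimal separator and recurse on the two sides; for the converse it inducts on the number of vertices of $\Psi$, invoking Dirac's companion result that two chordal graphs glued along a common clique yield a chordal graph.  You instead reduce the whole statement to graph theory via Theorem~2.5 and argue directly: for the forward implication you apply Dirac's characterization \emph{inside} the induced subgraph on a maximal non-separated set $R$ to manufacture a complete separator of $R$, and for the converse you exhibit a chordless cycle whose vertex set is already non-separated by any clique, then extend it to a maximal such set.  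Your approach is shorter and avoids the recursive splitting, at the cost of using the slightly stronger form of Dirac's theorem (minimal $(a,b)$-separators, rather than just minimal separators, are cliques).  One small point: to apply Theorem~2.5 to the $\Psi$ of the present statement you need to know that a visual reduced JSJ decomposition over $\mathcal{FA}$ has the structural properties assumed in Theorem~2.5; this is supplied by Theorem~3.3, which you should cite.
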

\begin{proof}
Suppose $(W,S)$ is chordal. 
We prove that each vertex system of $\Psi$ is complete by induction on $|S|$. 
Suppose that $(W,S)$ is complete.  This includes the case $|S| =1$. 
Then $(W,S)$ is the only vertex system of $\Psi$ by Theorems 2.5 and 3.3, 
and $(W,S)$ is complete. 

Now suppose that $(W,S)$ is incomplete
and that each vertex system of a visual reduced JSJ decomposition over ${\cal FA}$ 
of a chordal Coxeter system, of rank less than $|S|$, is complete.  
Then $S$ has a separating subset. 
Let $S_0$ be a minimal separator of $S$. 
By Theorem 1 of \cite{Dirac}, the set $S_0$ is complete. 
Now $\langle S_0\rangle$ is an edge group of $\Psi$ by Theorems 2.5 and 3.3. 
As the graph of $\Psi$ is a tree, $\Psi$ is a nontrivial amalgamated 
product of two visual reduced graph of group decompositions $\Psi_1$ and $\Psi_2$ 
amalgamated along $\langle S_0\rangle$. 
By Theorems 2.4 and Theorem 3.3, we deduce that $\Psi_i$ 
is a visual reduced JSJ decomposition over ${\cal FA}$ of a proper subsystem $(W_i,S_i)$ of $(W,S)$ for $i=1,2$. 
Each subsystem of $(W,S)$ is chordal. 
By the induction hypothesis, each vertex system of $\Psi_1$ and $\Psi_2$ is complete. 
Therefore each vertex system of $\Psi$ is complete. 
This completes the induction. 

Conversely, suppose that each vertex system of $\Psi$ is complete. 
We prove that $(W,S)$ is chordal by induction on the number of vertices in the graph of $\Psi$. 
Suppose that the graph of $\Psi$ has only one vertex. 
Then the graph of $\Psi$ is a point, since the graph is a tree. 
Hence $(W,S)$ is complete.  Therefore $(W,S)$ is chordal. 

Now suppose that the graph of $\Psi$ has more than one vertex, 
and all finite rank Coxeter systems, with a visual reduced JSJ decomposition over ${\cal FA}$ 
with fewer vertices than $\Psi$ and all vertex systems complete, are chordal. 
Let $(E,T)$ be an edge system of $\Psi$. 
As the graph of $\Psi$ is a tree,  $\Psi$ is a nontrivial amalgamated 
product of two visual reduced graph of group decompositions $\Psi_1$ and $\Psi_2$ 
amalgamated along $\langle S_0\rangle$.  
By Theorems 2.4 and Theorem 3.3, we deduce that $\Psi_i$ 
is a visual reduced JSJ decomposition over ${\cal FA}$ of a proper subsystem $(W_i,S_i)$ of $(W,S)$ for  $i=1,2$. 
By the induction hypothesis, $(W_i,S_i)$ is chordal for $i=1,2$. 
Now $\Gamma(W,S) = \Gamma(W_1,S_1) \cup \Gamma(W_2,S_2)$ 
and $\Gamma(W_1,S_1)\cap \Gamma(W_2,S_2) = \Gamma(E,T)$ with $\Gamma(E,T)$ complete. 
Therefore $(W,S)$ is chordal by Theorem 2 of \cite{Dirac}. 
This completes the induction. 
\end{proof}

\section{Application to the Isomorphism Problem}  

Let $(W,S)$ be a Coxeter system of finite rank. 
Define 
$$S^W = \{wsw^{-1}: s\in S\ \hbox{and}\ w\in W\}.$$
Let $S'$ be another set of Coxeter generators of $W$.  
The set of generators $S$ is said to be {\it sharp-angled} with respect to $S'$ if for each pair 
$s,t\in S$ such that $2<m(s,t)<\infty$, 
there is a $w\in W$ such that $w\{s,t\}w^{-1} \subseteq S'$. 
The Coxeter systems $(W,S)$ and $(W,S')$ are said to be {\it twist equivalent} if there is a finite sequence 
of elementary twists that transforms $S$ into $S'$. 
If $(W,S)$ and $(W,S')$ are twist equivalent, then  $S' \subseteq S^W$ 
and $S$ is sharp-angled with respect to $S'$.

The following conjecture is due to M\"uhlherr  \cite{Muhlherr} (Conjecture 2). 

\begin{conjecture}  
{\rm (Twist Conjecture)} Let $(W,S)$ be a Coxeter system of finite rank, and let $S'$ 
be another set of  Coxeter generators of $W$ such that $S' \subseteq S^W$ and $S$ is sharp-angled with respect to $S'$. 
Then $(W,S)$ is twist equivalent to $(W,S')$. 
\end{conjecture}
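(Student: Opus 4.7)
The plan is to reduce Conjecture 5.1 to its special case in which $(W,S)$ is indecomposable as an amalgamated product over a complete visual subgroup (equivalently, by Lemma 3.2, in which the visual reduced JSJ decomposition of $(W,S)$ over ${\cal FA}$ has a single vertex). The argument proceeds by induction on the number of vertices of such a decomposition, using the uniqueness results of Section 3 to align the two sides by elementary twists.

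Concretely, I would take visual reduced JSJ decompositions $\Psi$ of $(W,S)$ and $\Psi'$ of $(W,S')$ over ${\cal FA}$, which exist by Lemma 2.2 and Theorem 2.4. If both have a single vertex, the indecomposable base case applies. Otherwise, choose a leaf vertex system $(V,R)$ of $\Psi$ with unique incident edge system $(E,T)$. By Theorem 3.1 there are a unique corresponding vertex system $(V',R')$ of $\Psi'$ and $w \in W$ with $V = wV'w^{-1}$, and by Theorem 3.5 the edge group $\langle T\rangle$ is conjugate in $W$ to an edge group of $\Psi'$ adjacent to $(V',R')$. I would then apply a finite sequence of elementary twists to $(W,S')$, of the kind appearing in Example 2.6 (slide moves) and Example 3.6, to realize that conjugation and bring the leaf of $\Psi'$ into precise visual coincidence with the leaf of $\Psi$, so that afterwards $R \subseteq S'$, $T \subseteq S'$, and the rest of $S'$ generates the complementary visual subsystem. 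At that point the hypotheses of the twist conjecture descend to the visual subsystem $(\langle S-(R-T)\rangle, S-(R-T))$, whose visual reduced JSJ decomposition has strictly fewer vertices; induction finishes the transformation while the pinned-down leaf remains fixed.

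The main obstacle, which the paper makes no claim of resolving, is the indecomposable base case itself. Within the reduction, the delicate point is realizing the abstract conjugating element $w$ of Theorem 3.1 as an actual composition of elementary twists of $(W,S')$: the definition of an elementary twist requires an $\ell \in \langle S_0\rangle$ with $\ell S_0 \ell^{-1}=S_0$, so one must factor $w$ through normalizers of complete visual subgroups rather than use it as a bare group element. Completeness of $T$ together with Lemma 4.3 of \cite{M-R-T} controls the conjugacy pattern of $T$ inside $S'$, and the sharp-angled hypothesis prevents new generator pairs of order strictly between $2$ and $\infty$ from appearing as twists are performed; maintaining both invariants along the induction is the technical heart of the reduction.
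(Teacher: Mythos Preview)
First, note that Conjecture~5.1 is stated as an open conjecture; the paper does not prove it. What the paper does prove is Theorem~5.3, the reduction you are also aiming for: if every vertex system of a visual reduced JSJ decomposition of $(W,S)$ over ${\cal FA}$ satisfies the twist conjecture, then so does $(W,S)$. So the relevant comparison is between your reduction strategy and the proof of Theorem~5.3.

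Your strategy and the paper's differ substantively. You propose to pick a leaf $(V,R)$ of $\Psi$, use Theorems~3.1 and 3.5 to locate a conjugate vertex group in $\Psi'$, realize the conjugating element $w$ as a sequence of elementary twists so as to pin that leaf down, and then induct on the complement. The paper does not proceed leaf by leaf. Instead it invokes Theorem~6.6 of \cite{M-R-T}, which in one stroke replaces $(W,S)$ and $(W,S')$ by twist-equivalent systems $(W,R)$ and $(W,R')$ admitting visual graph of groups decompositions with the \emph{same} graph and the \emph{same} vertex and edge groups, all edge groups equal to a fixed $\langle R_0\rangle$. After that, Lemma~8.1 of \cite{R-T} pushes the reflection and sharp-angled hypotheses down to each matched pair $(W_i,R_i)$, $(W_i,R_i')$, and the induction hypothesis together with the argument of Theorem~8.4 of \cite{R-T} finishes.

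The genuine gap in your outline is precisely the point you flag as ``delicate'': Theorems~3.1 and 3.5 yield only abstract conjugacy in $W$, and nothing in this paper converts an arbitrary $w$ with $V=wV'w^{-1}$ into a sequence of elementary twists of $S'$. Examples~2.6 and 3.6 exhibit single moves, not a general factorization procedure. The machinery that actually accomplishes this --- twisting both Coxeter generating sets until they share a common visual splitting --- lives in \cite{M-R-T} (Theorems~6.1 and 6.6), and the paper leans on it rather than reproving it. A smaller issue: Theorem~3.5 does not assert that the edge group of $\Psi'$ conjugate to $\langle T\rangle$ is adjacent to the vertex $(V',R')$ you matched with $(V,R)$, nor that $(V',R')$ is itself a leaf of $\Psi'$; both would require separate justification in your scheme.
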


\begin{lemma}  
If $(W,S)$ is a complete Coxeter system of finite rank,  
then $(W,S)$ satisfies the twist conjecture. 
\end{lemma}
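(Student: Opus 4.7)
The plan exploits the observation that a complete Coxeter system admits no genuinely nontrivial elementary twist. If $(S_1,\ell,S_2)$ is an elementary twist of $(W,S)$, then the required condition $m(a,b)=\infty$ for all $a\in S_1-S_0$ and $b\in S_2-S_0$ is incompatible with completeness unless $S_1\subseteq S_2$ or $S_2\subseteq S_1$; in that case the remark following the definition of elementary twist gives $S' = \ell S \ell^{-1}$. So sequences of elementary twists applied to $(W,S)$ produce only $W$-conjugates of $S$. The twist conjecture for $(W,S)$ therefore reduces to two tasks: showing that any Coxeter generating set $S'$ of $W$ with $S'\subseteq S^W$ and $S$ sharp-angled with respect to $S'$ must be $W$-conjugate to $S$, and expressing such a conjugation as a finite sequence of elementary twists.

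For the first task I would begin by showing that $(W,S')$ is also complete. Since $(W,S)$ is complete, $W=\langle S\rangle$ is a complete visual subgroup of itself and so has property FA. As $\langle S'\rangle = W$, Theorem 1 of \cite{M-T} places $W$ inside a $W$-conjugate of a maximal complete visual subgroup of $(W,S')$; since no conjugate of a proper subgroup of $W$ can contain all of $W$, this maximal complete visual subgroup must be $W$ itself, forcing $S'$ to be complete. Next, Lemma 4.3 of \cite{M-R-T} applied to the trivial containment $\langle S'\rangle = W \subseteq W = \langle S\rangle$ produces $v\in W$ with $vS'v^{-1}\subseteq S$. Since $vS'v^{-1}$ generates $W$ and no proper subset of $S$ can generate $W$ (a proper visual subgroup is always proper in the ambient $W$), we conclude $vS'v^{-1} = S$, i.e., $S' = \ell S\ell^{-1}$ with $\ell = v^{-1}$.

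It remains to realize $S' = \ell S\ell^{-1}$ as a sequence of elementary twists. Writing $\ell = s_{i_1}\cdots s_{i_k}$ as a product of elements of $S$, I would perform $k$ successive elementary twists, each a single-generator conjugation of the current generating set: the twist of type $(\{r\},r,S_{\mathrm{current}})$ with $r$ a singleton from the current Coxeter generators is always legal since $\{r\}$ is complete, $r\in\langle\{r\}\rangle$, and $r\{r\}r^{-1}=\{r\}$. After conjugating by $s_{i_k}$, the raw $s_{i_{k-1}}$ need not lie in $s_{i_k}Ss_{i_k}^{-1}$, so one instead conjugates by $s_{i_k}s_{i_{k-1}}s_{i_k}^{-1}$, which does lie there; iterating this, after the $j$th twist the generating set is $s_{i_{k-j+1}}\cdots s_{i_k}Ss_{i_k}^{-1}\cdots s_{i_{k-j+1}}^{-1}$, so after $k$ twists it is $\ell S\ell^{-1} = S'$.

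I anticipate the main obstacle to be the bookkeeping of the final step: at each stage the correct element of the current Coxeter generating set (the accumulated conjugate of the desired $s_{i_j}$) must be identified, and one must verify that these single-generator twists telescope to conjugation by $\ell$. The first two steps, by contrast, are essentially immediate applications of the cited theorems once the reduction is in place.
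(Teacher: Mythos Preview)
Your argument has a genuine gap at the second task: the appeal to Lemma~4.3 of \cite{M-R-T} to conclude $vS'v^{-1}\subseteq S$ from the trivial containment $\langle S'\rangle=\langle S\rangle$ is not a valid application of that lemma, and in fact the conclusion is false in general. Notice that your argument never invokes the sharp-angled hypothesis; but without it the desired conjugacy can fail even for complete systems. Take $W=D_5$ with $S=\{r_0,r_1\}$ the standard $I_2(5)$ generators, and $S'=\{r_0,r_2\}$. Both are complete Coxeter generating sets, $S'\subseteq S^W$ since all reflections in $D_5$ are conjugate, yet the $W$-orbits of unordered pairs of reflections split into ``adjacent'' and ``skip-one'' pairs, so $S$ and $S'$ are \emph{not} conjugate in $W$. (Correspondingly, $S$ is not sharp-angled with respect to $S'$, since that would force $w\{r_0,r_1\}w^{-1}=\{r_0,r_2\}$.) Lemma~4.3 of \cite{M-R-T} concerns visual subgroups of a fixed Coxeter system and does not let you transport an arbitrary second generating set $S'$ into $S$ by conjugation.

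The paper's proof supplies exactly the missing ingredient: after showing $S'$ is complete, it factors $(W,S)$ and $(W,S')$ into irreducible components, uses Franzsen--Howlett to match the factors $W_i=W_i'$, and then---crucially using sharp-angledness via Lemma~7.1 of \cite{R-T}---finds $w_i\in W_i$ with $w_iS_iw_i^{-1}=S_i'$ on each irreducible piece; the product $w=w_1\cdots w_n$ then conjugates $S$ to $S'$. Your reduction (twists on a complete system are conjugations) and your realization of a global conjugation as a sequence of single-generator twists are both fine, but you must replace the Lemma~4.3 step with an argument that actually consumes the sharp-angled hypothesis.
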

\begin{proof}
Let $S'$ be another set of Coxeter generators for $W$ such that $S'\subseteq S^W$ 
and $S$ is sharp-angled with respect to $S'$. 
We need to prove that $(W,S)$ is twist equivalent to $(W,S')$. 
As $S$ has no separating subsets, $(W,S)$ can only be twisted by conjugating $S$. 
Now $W$ has property FA, since $S$ is complete.   
Hence $S'$ is complete by Lemma 25 of \cite{M-T}. 
Let $(W,S) = (W_1,S_1)\times\cdots\times(W_n,S_n)$
be the factorization of $(W,S)$ into irreducible factors, and let 
$(W,S') = (W_1',S_1')\times\cdots\times(W_m',S_m')$ 
be the factorization of $(W,S')$ into irreducible factors. 
By Lemma 14 of Franzsen and Howlett \cite{F-H}, 
$m=n$ and by reindexing, we may assume that $W_i'=W_i$ for each $i=1,\ldots,n$. 
As $S$ is sharp-angled with respect to $S'$, there is a $w_i\in W$ such that 
$w_iS_iw_i^{-1}\subseteq S'$ for each $i$ by Lemma 7.1 of \cite{R-T}. 
As the $j$th component of $w_i$, for $j\neq i$, centralizes $W_i$, 
we may assume that $w_i\in W_i$. 
Then $w_iS_iw_i^{-1} = S_i'$ for each $i$. 
Let $w=w_1\cdots w_n$. 
Then $wSw^{-1} = S'$. 
Therefore $(W,S)$ is twist equivalent to $(W,S')$. 
Thus $(W,S)$ satisfies the twist conjecture. 
\end{proof}

\begin{theorem}  
Let $(W,S)$ be a Coxeter system of finite rank, and let $\Psi$ be a visual reduced JSJ decomposition of $(W,S)$ over ${\cal FA}$.  If each vertex system $(V,R)$ of $\Psi$ satisfies the twist conjecture, then $(W,S)$ satisfies the twist conjecture. 
\end{theorem}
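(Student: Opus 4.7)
The plan is to induct on the number of vertices in the graph of $\Psi$. If $\Psi$ has a single vertex, then $(W,S)=(V,R)$ and the hypothesis that $(V,R)$ satisfies the twist conjecture is exactly the conclusion. For the inductive step, fix a second Coxeter generating set $S'$ of $W$ with $S'\subseteq S^W$ and $S$ sharp-angled with respect to $S'$; the goal is to produce a finite sequence of elementary twists from $S$ to $S'$. Let $\Psi'$ be a visual reduced JSJ decomposition of $(W,S')$ over ${\cal FA}$, which exists by Lemma 2.2 and Theorem 2.4. By Theorems 3.1 and 3.5, the vertex groups and edge groups of $\Psi$ and $\Psi'$ are in correspondence up to conjugation in $W$. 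Pick a leaf vertex system $(V,R)$ of $\Psi$ with unique incident edge system $(E,T)$, and let $(V^*,R^*)$ be the corresponding vertex system of $\Psi'$, so $V^*=wVw^{-1}$ for some $w\in W$. Then $R^{**}:=w^{-1}R^*w$ is a second Coxeter generating set for $V$.

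I would next verify the two compatibility conditions needed to apply the twist conjecture to $(V,R)$ relative to $R^{**}$: namely $R^{**}\subseteq R^V$ and $R$ sharp-angled with respect to $R^{**}$. The first should follow from $S'\subseteq S^W$ combined with Lemma 3 of \cite{M-T} (controlling the conjugator between equal vertex groups) and Lemma 4.3 of \cite{M-R-T} (controlling conjugacy of visual subsets that generate the same subgroup). The second should follow from the observation that for $s,t\in R$ with $2<m(s,t)<\infty$, the dihedral subgroup $\langle s,t\rangle$ has property FA, so any conjugate of $\{s,t\}$ lying in $S'$ must lie in a maximal visual ${\cal FA}$ subgroup of $(W,S')$; using the maximality of $V^*$ as a vertex group of $\Psi'$, the conjugator can be arranged to lie inside $V$, giving sharp-angledness inside $V$. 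Once both conditions are in place, the twist-conjecture hypothesis on $(V,R)$ yields a finite sequence of elementary twists of $(V,R)$ carrying $R$ to $R^{**}$.

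The crucial step is lifting each such elementary twist of $(V,R)$ to an elementary twist of $(W,S)$. For an elementary twist along a visual amalgamated product decomposition $V=\langle R_1\rangle*_{\langle C\rangle}\langle R_2\rangle$ by $\ell\in\langle C\rangle$, the complete set $T\subseteq R$ cannot straddle $C$: two elements $t_1\in T\cap(R_1-C)$ and $t_2\in T\cap(R_2-C)$ would have $m(t_1,t_2)$ both finite (by completeness of $T$) and infinite (by the amalgamation hypothesis), a contradiction. Hence $T\subseteq R_1\cup C$ or $T\subseteq R_2\cup C$, and the rest of $S$, which attaches to $V$ only through $T$, can be unambiguously placed on the matching side to extend the amalgamated product decomposition of $V$ to one of $W$; the same $\ell$ then gives an elementary twist of $(W,S)$. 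After performing all lifted twists, followed by elementary twists realizing conjugation by $w$ (decomposed using the paper's observation that conjugation by $\ell\in\langle S_0\rangle$ with $\ell S_0\ell^{-1}=S_0$ and $S_1\subseteq S_2$ is itself an elementary twist, moving $w$ through the tree of $\Psi$ by successive edge-group conjugations), the leaf vertex $V$ is visually aligned with $V^*$. The remaining structure is a visual reduced JSJ decomposition of a proper visual subsystem of $W$ with one fewer vertex, whose vertex systems still satisfy the twist conjecture, so the inductive hypothesis finishes the proof. The main obstacle I anticipate is realizing the conjugation by a general $w\in W$ as a composition of elementary twists along the JSJ tree, since $w$ need not normalize $T$ directly; this will likely require an auxiliary induction on the distance in $\Psi$ between $V$ and the support of $w$, together with a careful verification that the sharp-angled condition descends at each stage.
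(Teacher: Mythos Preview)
Your overall induction scheme matches the paper's, but the inductive step diverges from the paper's argument in a way that leaves real gaps.

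The paper does \emph{not} work with the JSJ decomposition $\Psi'$ of $(W,S')$ and the conjugacy correspondence of Theorems~3.1 and~3.5. Instead, it fixes a complete separator $C$ of $S$ and invokes the matching theorem (Theorem~6.6 of \cite{M-R-T}) to twist \emph{both} $S$ and $S'$ to new generating sets $R$ and $R'$ that admit visual graph-of-groups decompositions $\Phi,\Phi'$ with \emph{identical} graphs and \emph{identical} vertex and edge groups, all edge groups being the same complete subgroup $\langle R_0\rangle$. This alignment is the crux: once $W_i=W_i'$ exactly (not just up to conjugacy), Lemma~8.1 of \cite{R-T} furnishes $R_i'\subseteq R_i^{W_i}$ and sharp-angledness inside each $W_i$, the induction hypothesis applies to each $(W_i,R_i)$, and the pieces reassemble via the common edge group as in the last paragraph of Theorem~8.4 of \cite{R-T}.

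Your route via Theorems~3.1 and~3.5 does not give this alignment. Theorem~3.5 only says each edge group of $\Psi$ is conjugate to \emph{some} edge group of $\Psi'$; the Remark immediately following it records that $\Psi$ and $\Psi'$ need not even have the same number of edge groups, so there is no bijection to exploit. Your verification of sharp-angledness inside $V$ is also incomplete: a conjugate of $\{s,t\}$ landing in $S'$ lies in a complete subset of $S'$, but vertex sets of $\Psi'$ are maximal subsets \emph{not separated by} a complete set, not maximal complete sets, so there is no reason the dihedral pair must land in $R^*$. Finally, the obstacle you yourself flag---realizing conjugation by an arbitrary $w\in W$ as a composite of elementary twists along the JSJ tree---is exactly what the matching theorem of \cite{M-R-T} is designed to circumvent, and it is not something you can finesse by an auxiliary induction without essentially reproving that theorem. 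The paper's approach avoids all of these issues by arranging equality of vertex groups up front.
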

\begin{proof}
Let $S'$ be another set of Coxeter generators for $W$ such that $S'\subseteq S^W$ 
and $S$ is sharp-angled with respect to $S'$. 
We need to prove that $(W,S)$ is twist equivalent to $(W,S')$. 
The proof is by induction on the number of vertices of the graph of $\Psi$. 
Suppose the graph of $\Psi$ has only one vertex.  Then the graph of $\Psi$ 
is a single point, since the graph of $\Psi$ is a tree. 
Hence $(W,S)$ satisfies the twist conjecture by hypothesis. 
Therefore $(W,S)$ is twist equivalent to $(W,S')$. 

Now assume that the graph of $\Psi$ has more than one vertex, 
and the theorem is true for all Coxeter systems of finite rank 
whose JSJ decompositions over ${\cal FA}$ have fewer vertex systems than $\Psi$. 
Then $S$ has a complete separating subset $C$. 
We now follow the argument in the proof of Theorem 8.4 of \cite{R-T}. 

Let $(A,C,B)$ be a separation of $S$. 
Then $W = \langle A\rangle \ast_{\langle C\rangle} \langle B\rangle$ 
is a nontrivial amalgamated product decomposition. 
By Theorem 6.6 of \cite{M-R-T}, the Coxeter systems 
$(W,S)$ and $(W,S')$ are twist equivalent 
to Coxeter systems $(W,R)$ and $(W,R')$, respectively, 
such that there exists a nontrivial
visual reduced graph of groups decomposition $\Phi$ of $(W,R)$
and a nontrivial visual visual graph of groups decomposition
$\Phi'$ of $(W,R')$ having the same graphs and the same vertex
and edge groups and all edge groups equal and a subgroup of a
conjugate of $\langle C \rangle$.
Now $R'\subseteq R^W$ and $R$ is sharp-angled with respect to $R'$, 
since $R'$ is twist equivalent to $S'$. 

Let $\{(W_i,R_i)\}_{i=1}^k$ be the Coxeter systems of the vertex groups of $\Psi$, 
and let $(W_0,R_0)$ be the Coxeter system of the edge group of $\Psi$. 
Then $k\geq 2$, and $R =\cup_{i=1}^k R_i$, and $\cap_{i=1}^k R_i = R_0$, 
and $R_i - R_0 \neq \emptyset$ for each $i > 0$, 
and $m(a,b) = \infty$ for each $a \in R_i-R_0$ and $b \in R_j-R_0$ with $i\neq j$. 
By Lemma 4.3 and Theorems 6.1 and 6.6 of \cite{M-R-T}, 
we have that $R_0$ is conjugate to a subset of $C$, and so $R_0$ is complete. 
By Theorems 6.1 and 6.6  of \cite{M-R-T}, we have that $R_0$ 
is conjugate to a c-minimal separator of $S$.  
By Lemma 3.4, there is a c-minimal separator $R_0''$ of $R$ such that $\langle R_0\rangle$ is conjugate 
to $\langle R_0''\rangle$.  By Lemma 4.3 of \cite{M-R-T}, we have that $R_0$ is conjugate to $R_0''$. 
As $R_0$ separates $R$, we conclude that $R_0$ is a c-minimal separator of $R$. 

Let $\Phi_i$ be a visual reduced JSJ decomposition of $(W_i,R_i)$ over ${\cal FA}$ 
for each $i=1,\ldots, k$. 
As $R_0$ is a complete minimal separator of $R$, we can amalgamate 
$\Phi_1,\ldots, \Phi_k$ to give a visual reduced JSJ decomposition $\Phi$ of $(W,R)$ over ${\cal FA}$ 
with the same vertex groups and the edge group $\langle R_0\rangle$ joining 
a vertex group of $\Psi_i$ to a vertex group of $\Psi_{i+1}$, for each $i=1,\ldots,k-1$,  
by the same argument as in the proof of Lemma 2.2.  
Hence the number of vertices in the graph of $\Phi_i$ is less than the number of vertices 
of the graph of $\Phi$ for each $i=1,\ldots, k$. 
By Theorem 3.1, the graphs of $\Phi$ and $\Psi$ have the same number of vertices.

Let $\{(W_i',R_i')\}_{i=1}^k$ be the Coxeter systems of the vertex groups of $\Psi'$ 
indexed so that $W_i' = W_i$ for each $i$, 
and let $(W_0',R_0')$ be the Coxeter system of the edge group of $\Psi'$. 
Then $W_0' = W_0$, and $R' =\cup_{i=1}^k R_i'$, and $\cap_{i=1}^k R_i' = R_0'$, 
and $R_i' - R_0' \neq \emptyset$ for each $i > 0$, 
and $m(a',b') = \infty$ for each $a' \in R_i'-R_0'$ and $b' \in R_j'-R_0'$ with $i\neq j$.

By Lemma 8.1 of \cite{R-T}, we have  $R_i'\subseteq R_i^{W_i}$ and  
$R_i$ is sharp-angled with respect to $R_i'$ for each $i$. 
Hence by the induction hypothesis, $(W_i,R_i)$ is twist equivalent to $(W_i,R_i')$ for each $i$. 
As $R_0$ is complete, there is an element $w_0$ of $W_0$ such that $w_0R_0w_0^{-1} = R_0'$ 
by the proof of Lemma 5.2.  
By conjugating $W$ by $w_0$, we may assume that $R_0=R_0'$. 
By the same argument as in the last paragraph of the proof of Theorem 8.4 of \cite{R-T}, 
we have that $(W,R)$ is twist equivalent to $(W,R')$, 
and so $(W,S)$ is twist equivalent to $(W,S')$. 
This completes the induction. 
\end{proof}

\begin{corollary}  
{\rm (Theorem 8.4, \cite{R-T})}
All Chordal Coxeter systems of finite rank satisfy the twist conjecture. 
\end{corollary}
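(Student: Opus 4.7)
The plan is to chain together the three results developed in the preceding sections, so the argument is essentially a one-liner once the right pieces are in place. First I would invoke Lemma~2.2 (together with Theorem~2.4) to produce a visual reduced JSJ decomposition $\Psi$ of $(W,S)$ over ${\cal FA}$, which exists for every Coxeter system of finite rank.

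Next I would apply Theorem~4.1 to the chordal hypothesis on $(W,S)$: it tells us that every vertex group of $\Psi$ is a complete visual subgroup of $(W,S)$. By Lemma~5.2, each such complete vertex system satisfies the twist conjecture.

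Finally, Theorem~5.3 says precisely that if every vertex system of $\Psi$ satisfies the twist conjecture, then $(W,S)$ itself does. Composing these three steps immediately yields the corollary.

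There is no real obstacle here; the theorem is advertised as a corollary because the heavy lifting was done in Theorem~4.1 (the structural characterization of chordal Coxeter systems via their JSJ vertex groups), Lemma~5.2 (the base case of the twist conjecture for complete systems), and Theorem~5.3 (the inductive amalgamation step that propagates the twist conjecture from JSJ vertex groups to the whole group). The only thing to check is that one genuinely has the right kind of JSJ decomposition available, which is guaranteed by Lemma~2.2 and Theorem~2.4, so no further argument is needed.
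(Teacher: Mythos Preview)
Your proposal is correct and matches the paper's own proof, which simply cites Theorem~4.1, Lemma~5.2, and Theorem~5.3. Your additional explicit appeal to Lemma~2.2 and Theorem~2.4 for the existence of $\Psi$ is harmless extra care, since those results are implicitly presupposed by the hypotheses of Theorem~4.1 and Theorem~5.3.
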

\begin{proof} This follows from Theorem 4.1, Lemma 5.2, and Theorem 5.3. 
\end{proof}

\end{document}